\documentclass[a4paper, 12pt]{article}
\usepackage[left=3cm, right=2cm, top=2.5cm, bottom=2.5cm]{geometry}

\usepackage{amssymb,amsmath,amsthm,amsfonts,mathrsfs,bbm}
\usepackage[pdftex]{graphicx}
\usepackage{subcaption}
\usepackage{hyperref}
\usepackage{multirow}
\usepackage{cleveref}
\usepackage{listings}
\usepackage[T1]{fontenc}
\usepackage[utf8]{inputenc}
\usepackage[english]{babel}

\usepackage{csvsimple}
\usepackage{tikz}
\usepackage{pgfplots}
\pgfplotsset{compat=1.18}
\usepackage{pgfplotstable}

\usepackage{algpseudocode}

\usepackage{paralist,enumitem,tabularx}
\usepackage{mathtools}

\usepackage{layout}
\usepackage{fancyhdr}
\usepackage{setspace}

\usepackage{float}

\usepackage{pdfpages}

\usepackage{xcolor}

\usepackage{datetime}
\usepackage[backend=bibtex,maxnames=99]{biblatex} 
\numberwithin{equation}{section}

\theoremstyle{bolddef}
\newtheorem{definition}{Definition}[section]
\newtheorem{algorithm}[definition]{Algorithm}
\newtheorem{assumption}[definition]{Assumption}

\theoremstyle{boldplain}
\newtheorem{lemma}[definition]{Lemma}
\newtheorem{theorem}[definition]{Theorem}
\newtheorem{proposition}[definition]{Proposition}

\newcommand{\clball}{\overline{B}}

\newcommand{\xk}{x^k}
\newcommand{\yk}{y^k}
\newcommand{\wk}{w^k}

\newcommand{\tauk}{\tau_k}

\newcommand{\xkp}{x^{k+1}}

\newcommand{\xj}{x^j}
\newcommand{\xjp}{x^{j+1}}
\newcommand{\xjm}{x^{j-1}}

\newcommand{\pmin}{p_{\min}}

\newcommand{\taumin}{\tau_\mathrm{min}}
\newcommand{\taumax}{\tau_\mathrm{max}}

\newcommand{\R}{\mathbb{R}}
\newcommand{\barR}{\overline{\mathbb{R}}}
\newcommand{\N}{\mathbb{N}}
\newcommand{\X}{\mathbb{X}}
\newcommand{\dom}{\mathrm{dom}}
\newcommand{\Rvalue}{\mathcal{R}}
\newcommand{\dist}{\mathrm{dist}}

\newcommand{\proj}{\Pi}
\newcommand{\diag}{\operatorname{diag}}
\newcommand{\rank}{\operatorname{rank}}
\newcommand{\trace}{\operatorname{trace}}

\DeclareMathOperator*{\argmin}{arg\,min}

\date{\today}

\title{Projected Subgradient Methods for a Class of Nonsmooth and Nonconvex Optimization Problems}
\author{Christian Kanzow\footnote{University of W\"urzburg, Institute
	of Mathematics, Emil-Fischer-Str.\ 30, 97074 W\"urzburg, Germany; 
	e-mail: christian.kanzow@uni-wuerzburg.de} 
	\and Jannis Krüger \footnote{University of W\"urzburg, Institute
		of Mathematics, Emil-Fischer-Str.\ 30, 97074 W\"urzburg, Germany; 
		e-mail: jannis.krueger@uni-wuerzburg.de}
	\and Leo Lehmann\footnote{University of W\"urzburg, Institute
		of Mathematics, Emil-Fischer-Str.\ 30, 97074 W\"urzburg, Germany; 
		e-mail: leo.lehmann@uni-wuerzburg.de} }

\begin{document}

\maketitle

\begin{abstract}
	We investigate the optimization problem of minimizing a nonsmooth function that satisfies a nonsmooth version of the descent lemma over a nonempty and closed but not necessarily convex set. The objective function belongs to the class of upper-$\mathcal{C}^2$ functions, whereas the constraints may promote a sparse or low-rank structure. We propose a projected subgradient method with two different globalization strategies: (a) a nonmonotone linesearch and, under additional assumptions, (b) an auto-conditioned method, where the stepsize is given by a formula depending on data from past iterations. We show that both methods converge to solutions that satisfy a stronger stationarity concept than one would expect from the subdifferential sum-rule, which is particularly important since the optimization problems of interest are inherently nonconvex. Finally, we present promising numerical results when applying the algorithm to an MPEC-style problem as well as the matrix optimization problems MAXCUT and Robust PCA.
\end{abstract}

\noindent
\small\textbf{Keywords.}
nonsmooth optimization, nonconvex optimization, projected subgradient method, nonmonotone line search, line search free methods, stationary point, critical point, tangent and normal cones
\par\addvspace{\baselineskip}

\noindent
\small\textbf{AMS subject classifications.}
90C30, 49J53, 65K05%, 49J52
\par\addvspace{\baselineskip}

\section{Introduction}

Let us consider the optimization problem
\begin{equation}\label{eq:optproblem}
	\min_{x \in D} \varphi(x),
\end{equation}
where $D \subseteq \X$ is a nonempty and closed but not necessarily convex subset of some Euclidean space $\X$. We assume that the objective function $\varphi$ satisfies a nonsmooth (and local) formulation of the descent lemma. The corresponding class of functions is called upper-$\mathcal{C}^2$ and has been shown to have favorable properties for the application of subgradient methods with linesearch \cite{aragon2025}. Notably, every differentiable function with locally Lipschitz continuous gradient is upper-$\mathcal{C}^2$. On the other hand, every concave function is also upper-$\mathcal{C}^2$. Moreover, the sum of upper-$\mathcal{C}^2$ functions is still an upper-$\mathcal{C}^2$ function. Consequently, many DC (DC = difference-of-convex) programs belong to the class of problem \eqref{eq:optproblem}. This motivates our consideration of constrained optimization problems as in \eqref{eq:optproblem}.

In the differentiable case, it has recently been shown in \cite{olikier2025} that projected gradient methods with linesearch accumulate at Bouligand stationary points. In the case of a local Lipschitz gradient, the accumulation points are even proximally stationary, meaning that the negative gradient is a so-called proximal normal. Notably, the proximal normal cone is in general smaller than both the regular and limiting normal cone. We show that by applying a projected subgradient method with (nonmonotone) linesearch to our problem \eqref{eq:optproblem}, one obtains that limits of the subgradient sequence are contained in the proximal normal cone. This is a stronger stationarity than that obtained directly from calculus rules of the limiting subdifferential as the latter only guarantees the existence of a negative subgradient that is a limiting normal to $D$.

For a continuously differentiable function $ \varphi $, the projected gradient method is a standard method for optimization over a nonempty closed set $D$. It is based on the iteration
\begin{equation*}
	\xkp \in \proj_D(\xk - \tauk \nabla \varphi(\xk)),
\end{equation*}
where $\tauk$ is a stepsize and $\proj_D$ is the possibly set-valued projection onto $D$. In our descent method, where we allow for nonsmooth objective functions $\varphi$, we replace the gradient by an arbitrary element $\wk \in \partial \varphi(\xk)$, where $\partial \varphi$ denotes the limiting (or Clarke) subdifferential of $\varphi$.

The choice of $\tauk$ is crucial for the convergence behavior of the algorithm. Using a backtracking linesearch, the stepsize is chosen such that a suitable linesearch-criterion holds. However, by using the Armijo condition (see \cite{Armijo1966}) a descent in the function value is enforced in every iteration. In order to obtain (possibly) larger stepsizes and fewer backtracking iterations, we employ a nonmonotone method \cite{aragon2025, BirginMartinezRaydan2000, DeMarchi2023}. In the literature, there are essentially two approaches to obtain nonmonotone stepsize rules: The max-type rule by Grippo et al. \cite{GrippoLamparielloLucidi1986} and the mean-type rule by Zhang and Hager \cite{ZhangHager2004}. In the constrained case, both have been studied for differentiable objective functions in \cite{olikier2025}. On the other hand, without the projection step, they were also employed for our class of nonsmooth objectives in \cite{aragon2025} and \cite{kanzow2026}, respectively.

As a second contribution, under the stronger assumption that the nonsmooth formulation of the descent lemma holds with a global constant, we propose an auto-conditioned method, where the stepsize is calculated from an explicit formula depending on past iterates, function values and subgradients. This line of work has recently attracted a lot of interest as it may lead to a significant reduction in per-iteration cost compared the linesearch methods and still needs no problem-dependent parameters as input. A similar approach was proposed in the pioneering work \cite{malitsky2020} for convex and differentiable objectives, whereas our stepsize is closely related to the works \cite{lan2026} and \cite{yagishita2025}.

The paper is organized in the following way. In Section~\ref{Sec:Background}, we introduce some notation and concepts from variational analysis. We define the class of upper-$\mathcal{C}^2$ functions and recall some of their properties that make them compelling for optimization methods. Section~\ref{Sec:AlgorithmNL} presents our nonmonotone descent method to solve \eqref{eq:optproblem}. We discuss its properties and give (global) convergence guarantees. The subsequent Section~\ref{Sec:AlgorithmAC} introduces the auto-conditioned method for \eqref{eq:optproblem}. Again, we obtain essentially the same convergence results, however, under additional and stronger assumptions. Next, in Section~\ref{Sec:Numerics}, we apply the algorithm to do numerical experiments on three theoretically and practically relevant examples. Lastly, we conclude with some remarks in Section~\ref{Sec:Final}.

Notation: We will consider optimization problems on some nonempty closed set $D \subseteq \X$, where $\X$ is a Euclidean space (finite-dimensional Hilbert space). We identify the dual space $\X^*$ with $ \X $ itself. We write $ \langle x, y \rangle $ for the scalar product of two elements $ x, y \in \X $ and $ \| x \| $ for the induced norm of some $ x \in \X $. The projection of $x \in \X$ onto $D$ is denoted by $\proj_D(x) := \{ y \in \X \mid \dist(x, D) = \|x-y\| \}$, where the distance function is $\dist(x, D):=\inf_{y \in D} \| x-y\|$. If $D$ is, in addition, convex, $\proj_S(x)$ is single-valued for all $x \in \X$. Also, we write the indicator function of $D$ as $\delta_D$, which is $0$ for arguments in $D$ and $\infty$ else.

We denote by $B_r(x)$ the open and $\clball_r(x)$ the closed ball of some radius $r>0$ around $x \in \X$. For arbitrary $S \subseteq \X$, we denote its interior by $\mathrm{int}(S)$, the closure by $\mathrm{cl}(S)$, and the convex hull by $\mathrm{conv}(S)$.

Finally, let $ \R $ denote the set of real numbers, while $ \barR := (- \infty, 
+ \infty ] $ is the set of extended reals except that we exclude the
value $ - \infty $. Given an extended-valued function $ \theta: 
\X \to \barR $, we call $ \dom (\theta) := 
\{ x \in \X \mid \theta (x)  < \infty \} $
the domain of $ \theta $. The function $ \theta $ is said to
be proper if $ \dom (\theta) $ is nonempty. 

\section{Background Material}\label{Sec:Background}

Let us recall some results from variational analysis. We refer the
interested reader to the two excellent monographs \cite{Mordukhovich2018, RockafellarWets2009}
for more details.

Given a proper, lower semicontinuous function $ g: \X \to
\barR $ and any $ x \in \dom (g) $, we call 
\begin{equation*}
	\partial^F g(x) := \Big\{ v \in \X \, \Big|
	\liminf_{y \to x, y \neq x} \frac{g(y) - g(x) - 
	\langle v, y-x \rangle}{\| y- x \|} \geq 0 \Big\}
\end{equation*}
the \emph{regular} or \emph{Fr\'echet subdifferential} of $ f $ at $ x $, 
whereas
\begin{equation*}
	\partial^M g(x) := \big\{ v \in \X \mid 
	\exists \xk,  v^k \in \X : \xk \to x, g(\xk) \to g(x), v^k \to v,
	v^k \in \partial^F g(\xk) \ \forall k \big\}
\end{equation*}
is called the \emph{limiting, Mordukhovich}, or 
\emph{basic subdifferential}
of $ g $ at $ x $. 

For a function $g$ that is locally Lipschitz continuous around $\bar x \in \mathbb{R}$, we define the \emph{Clarke subdifferential} of $g$ at $\bar x $ as 
\begin{equation*}
	\partial^C g(\bar x):= \{ v \in \X : \langle v, h \rangle \leq g^\circ (\bar x; h) \text{ for all } h \in \X \},
\end{equation*}
where
\begin{equation*}
	g^\circ(\bar x; h) := {\lim \sup}_{x \to \bar x, t \to 0^+} \frac{g(x+th) - g(x)}{t}
\end{equation*}
is the \emph{Clarke directional derivative} of $g$ at $\bar x$ in the direction $h$. Later, for results that hold true for both the limiting and the Clarke subdifferential, we use the following convention:
\begin{equation*}
	\partial g \text{ means either } \partial^M g \text{ or } \partial^C g.
\end{equation*}
Let us recall the following properties: It holds that $\partial^C g(\bar x) = \mathrm{conv}(\partial^M g(\bar x))$ for all locally Lipschitz continuous functions. Further, in that case, the Clarke subdifferentials of $g$ on bounded subsets of $\X$ remain bounded, too, see e.g. \cite{clarke1998}. By consequence, the same is true for the limiting subdifferential. Both the Clarke and the limiting subdifferential share the following so-called \emph{robustness} property: Assume that $\{\xk\}$ is a sequence converging to some limit $\bar x$, and $\wk \in \partial g(\xk)$ for all $k \in \N$ converges to some $\bar w \in \X$. Then it holds that $\bar w \in \partial g(\bar x)$.

Let us next introduce the class of upper-$ \mathcal{C}^2 $ functions. These are central for our method and the convergence analysis thereof. For more details on this class of functions we refer to \cite{RockafellarWets2009}.

\begin{definition}\label{def:upperC2}
Let $ U \subseteq \X$ be an open set. We say that a function
$ \varphi: U \to \mathbb{R} $ is {\normalfont upper-$ \mathcal{C}^2 $} on $ U $, if, 
on some neighborhood $ V $ of each $ \bar{x} \in U $, there is a 
representation 
\begin{equation*}
	\varphi (x) = \min_{c \in C} \varphi_c (x),
\end{equation*}
where the functions $ \varphi_c $ are of class $ C^2 $ on $ V $, and 
$ C $ is a compact set (in some topological space) such that 
$ \varphi_c $ and its first- and second-order partial derivatives
depend continuously on $ (x,c) \in V \times C $.
\end{definition}

For example, taking the discrete topology, it follows that functions that are the pointwise minimum of a finite family of twice continuously differentiable functions $ f_i: U \to \mathbb{R} $ on some open set $ U \subseteq \X $ are upper-$\mathcal{C}^2 $. Further examples can be derived from the subsequent characterization of upper-$\mathcal{C}^2$ functions from the recent report \cite[Prop.\ 3.2]{aragon2025}. We also refer to the later Section~\ref{Sec:Numerics} for applications where these functions arise.

\begin{proposition}\label{Prop:CharUpperC2}
Let $ U \subseteq \X $ be an open set and $ \varphi: \X \to \mathbb{R} $ be locally Lipschitz on $ U $. Then the following statements
are equivalent:
\begin{itemize}
	\item[(a)] $ \varphi $ is upper-$\mathcal{C}^2 $ on $ U $.
	\item[(b)] For each $ \bar{x} \in U $, there exist a constant
	   $ \kappa \geq 0 $ and some neighborhood $ V $ of $ \bar{x} $ such 
	   that
	   \begin{equation}\label{eq:descentproperty}
	   	  \varphi (y) \leq \varphi (x) + \langle w, y-x \rangle + 
	   	  \kappa \| y - x \|^2
	   \end{equation}
	   for all $ x, y \in V $ and all $ w \in \partial \varphi (x) $.
 	\item[(c)] For each $ \bar{x} \in U $, there exists some neighborhood
 	   $ V $ of $ \bar{x} $ where $ \varphi $ can be expressed as
 	   $ \varphi = g - h $, where $ g $ is differentiable with Lipschitz
 	   gradient, and $ h $ is Lipschitz and prox-regular (see \cite[Definition 3.27]{RockafellarWets2009}). Indeed, one can 
 	   take $ g = \kappa \| \cdot \|^2 $, for some $ \kappa \geq 0 $, and 
 	   $ h $ to be convex.
\end{itemize}
\end{proposition}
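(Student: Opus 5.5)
I will prove the cycle (a)$\Rightarrow$(c)$\Rightarrow$(b)$\Rightarrow$(a), working locally around a fixed $\bar x\in U$ and using only standard facts from \cite{RockafellarWets2009}.

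\emph{(a)$\Rightarrow$(c).} Take the representation $\varphi=\min_{c\in C}\varphi_c$ on a neighborhood $V$ of $\bar x$, and shrink $V$ to a closed ball around $\bar x$. Since $C$ is compact and the second derivatives depend continuously on $(x,c)$, they are bounded on $V\times C$. Hence there is $\kappa\ge0$ with $\nabla^2\varphi_c(x)\preceq 2\kappa I$ for all $(x,c)$. Then each $\kappa\|x\|^2-\varphi_c(x)$ is convex on the ball, so
\begin{equation*}
h:=\kappa\|\cdot\|^2-\varphi=\sup_{c\in C}\bigl(\kappa\|\cdot\|^2-\varphi_c\bigr)
\end{equation*}
is convex there. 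It is finite and locally Lipschitz, hence prox-regular. This gives $\varphi=g-h$ with $g=\kappa\|\cdot\|^2$.

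\emph{(c)$\Rightarrow$(b).} Here I use the version with $g$ having $L$-Lipschitz gradient and $h$ convex. The main task is to relate $\partial\varphi$ to $\nabla g-\partial h$. Since $g$ is $C^1$, the sum rule gives $\partial^M\varphi(x)=\nabla g(x)+\partial^M(-h)(x)$. For convex, locally Lipschitz $h$ we have $\partial^M(-h)(x)\subseteq-\partial^C h(x)=-\partial h(x)$, since $\partial^C(-h)=-\partial^C h$ and the limiting subdifferential lies in the Clarke one. The Clarke case follows the same way from $\partial^C\varphi\subseteq\nabla g-\partial^C h$. So any $w\in\partial\varphi(x)$ can be written as $w=\nabla g(x)-v$ with $v\in\partial h(x)$. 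The descent lemma gives $g(y)\le g(x)+\langle\nabla g(x),y-x\rangle+\tfrac L2\|y-x\|^2$, and the convex subgradient inequality gives $-h(y)\le -h(x)-\langle v,y-x\rangle$. Adding the two yields \eqref{eq:descentproperty} with $\kappa=L/2$. If the general prox-regular form of (c) is to be covered, prox-regularity of a Lipschitz $h$ gives $h(y)\ge h(x)+\langle v,y-x\rangle-\tfrac r2\|y-x\|^2$ locally. The same argument then works with a larger $\kappa$, provided $v$ is restricted to a bounded set of subgradients, which local Lipschitzness ensures.

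\emph{(b)$\Rightarrow$(a).} This is the step I expect to be the main obstacle, since a compact family of $C^2$ functions must be produced. Fix a compact ball $V'\subset V$ around $\bar x$ and set $C:=\{(z,w): z\in V',\ w\in\partial^C\varphi(z)\}$. This set is compact, because $\partial^C\varphi$ is locally bounded and has closed graph by the robustness property. Define
\begin{equation*}
\varphi_{(z,w)}(x):=\varphi(z)+\langle w,x-z\rangle+\kappa\|x-z\|^2 .
\end{equation*}
Each $\varphi_{(z,w)}$ is a quadratic in $x$. It and its first and second derivatives depend continuously on $(x,z,w)$, using continuity of $\varphi$. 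By (b), $\varphi_{(z,w)}(x)\ge\varphi(x)$ for $x\in V'$, and choosing $(z,w)=(x,w)$ gives equality. Hence $\varphi=\min_C\varphi_{(z,w)}$ on $\mathrm{int}\,V'$.

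One check is needed: (b) is stated for $\partial=\partial^M$ or $\partial^C$. If it holds for $\partial^M$, it extends to $\partial^C=\mathrm{conv}\,\partial^M$, because the right-hand side of \eqref{eq:descentproperty} is affine in $w$. So the construction with $\partial^C$ is legitimate in either case, and this closes the cycle.
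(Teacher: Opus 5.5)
Your proof is correct. It also does more than the paper, which gives no proof of this proposition and simply quotes it from \cite[Prop.~3.2]{aragon2025}. The (a)$\Leftrightarrow$(c) part with convex $h$ is the classical characterization of lower-$\mathcal{C}^2$ functions in \cite[Theorem 10.33]{RockafellarWets2009}, applied to $-\varphi$, and your (a)$\Rightarrow$(c) is exactly that Hessian-bound argument.

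The other two legs are self-contained. In (c)$\Rightarrow$(b), the inclusion $\partial^M(-h)\subseteq-\partial^C h$ lets you treat both subdifferentials at once. In (b)$\Rightarrow$(a), the family of quadratic majorants $\varphi_{(z,w)}$ indexed by the compact set $\{(z,w): z\in V',\, w\in\partial^C\varphi(z)\}$ yields the min-representation directly, with no convex analysis.

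Your remark that (b) for $\partial^M$ passes to $\partial^C=\mathrm{conv}\,\partial^M$, because the right-hand side of \eqref{eq:descentproperty} is affine in $w$, is what justifies the paper's ``$\partial$ means either'' convention here. The citation leaves this point implicit. The citation keeps the paper short, while your route shows that the ``for all $w\in\partial\varphi(x)$'' in (b) is precisely what produces the compact family required in (a).

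There are a few details to tighten.
\begin{itemize}
\item In (b)$\Rightarrow$(a), also choose $V'\subset U$, since the neighborhood in (b) need not lie in $U$. This is what makes $\partial^C\varphi$ bounded on $V'$.
\item In the optional prox-regular version of (c)$\Rightarrow$(b), boundedness of the subgradients is not what gives a uniform $r$. Prox-regularity is localized around pairs $(\bar x,\bar v)$ with $\bar v\in\partial h(\bar x)$. You therefore need a finite cover of the compact set $\partial h(\bar x)$ by the localization balls, taking the largest $r$. You also need outer semicontinuity of $\partial h$, so that every $v\in\partial h(x)$ with $x$ near $\bar x$ falls into one of those balls.
\item For nonconvex $h$, the sum rule only gives $v\in\partial^C h(x)=\mathrm{conv}\,\partial^M h(x)$. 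The prox-regularity inequality extends to such $v$ because it is affine in $v$.
\end{itemize}
With these remarks, your cycle also covers the weaker (prox-regular) reading of (c).
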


We will later see that inequality \eqref{eq:descentproperty} from part (b) is crucial for our convergence analysis. Namely, it is a nonsmooth counterpart of the usual descent lemma for smooth functions with Lipschitz gradient. In this analogy, the constant $2 \kappa $ in \eqref{eq:descentproperty} corresponds to the Lipschitz constant. Note, however, that in the inequality above, $ \kappa $ is a local constant possibly depending on the given point $ \bar{x} $. Inequality \eqref{eq:descentproperty} holds for an arbitrary subgradient $ w \in \partial \varphi (x) $ and not just for a particular element from $ \partial \varphi (x) $. This is a very strong observation that is later important to show that our subgradient method is well-defined. 

Finally, the characterization in (c) links upper-$\mathcal{C}^2$ functions to the class of DC-functions, see the corresponding discussion in \cite{aragon2025}. More precisely, DC-functions where the first term is smooth are upper-$\mathcal{C}^2$.

We proceed by showing that if $\varphi$ is upper-$\mathcal{C}^2$ on $U$ and $D \subset U$ is a compact subset, then the constant $\kappa$ in characterization (b) can be chosen as a global constant. This result and its proof are essentially the same as for the corresponding fact that every locally Lipschitz continuous function is globally Lipschitz on compact sets. 

\begin{lemma}\label{lemma:upperc2compact}
	Let $\varphi: \X \to \R$ be upper-$\mathcal{C}^2$ on an open set $U \subseteq \X$ and let $C \subset U$ be compact. Then there exists a constant $\kappa_C$ such that
	   \begin{equation}\label{eq:globaldescentproperty}
	   	  \varphi (y) \leq \varphi (x) + \langle w, y-x \rangle + 
	   	  \kappa_C \| y - x \|^2
	   \end{equation}
	   holds for all $ x, y \in C $ and all $ w \in \partial \varphi (x) $.
\end{lemma}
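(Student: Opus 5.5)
The plan is a compactness argument combined with a case split on the distance between $x$ and $y$. For each $\bar x \in C$, Proposition~\ref{Prop:CharUpperC2}(b) gives a constant $\kappa_{\bar x}\ge 0$ and a neighborhood $V_{\bar x}$ on which \eqref{eq:descentproperty} holds for all $x,y\in V_{\bar x}$ and all $w\in\partial\varphi(x)$. We may shrink $V_{\bar x}$ to an open ball $B_{2r_{\bar x}}(\bar x)$. The smaller balls $B_{r_{\bar x}}(\bar x)$ cover $C$, so by compactness finitely many suffice, centered at $\bar x_1,\dots,\bar x_m$. Set $\delta:=\min_i r_{\bar x_i}>0$ and $\kappa_1:=\max_i \kappa_{\bar x_i}$.

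\textbf{Close pairs.} Let $x,y\in C$ with $\|x-y\|<\delta$. Choose $i$ with $x\in B_{r_{\bar x_i}}(\bar x_i)$. Then $y\in B_{2r_{\bar x_i}}(\bar x_i)$, so both points lie in $V_{\bar x_i}$. Hence \eqref{eq:globaldescentproperty} holds with constant $\kappa_1$ for every $w\in\partial\varphi(x)$.

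\textbf{Far pairs.} Now let $\|x-y\|\ge\delta$. Since $\varphi$ is upper-$\mathcal{C}^2$ on $U$, it is locally Lipschitz there (e.g.\ via the representation in Proposition~\ref{Prop:CharUpperC2}(c), or directly from Definition~\ref{def:upperC2}), hence continuous. Therefore $M:=\max_C|\varphi|<\infty$. Moreover, as recalled in Section~\ref{Sec:Background}, the subdifferential (Clarke or limiting) of a locally Lipschitz function is bounded on bounded sets. Strictly, one should use a compact neighborhood of $C$ inside $U$ to apply local Lipschitzness uniformly; this is routine. So $L:=\sup\{\|w\| : w\in\partial\varphi(x),\,x\in C\}<\infty$. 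Then
\[
\varphi(y)-\varphi(x)-\langle w,y-x\rangle \le 2M + L\|y-x\| \le \Big(\frac{2M}{\delta^2}+\frac{L}{\delta}\Big)\|y-x\|^2 .
\]
This uses $\|y-x\|\ge\delta$, which gives $1\le \|y-x\|^2/\delta^2$ and $\|y-x\|\le\|y-x\|^2/\delta$.

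Taking $\kappa_C:=\max\{\kappa_1,\,2M/\delta^2+L/\delta\}$ then yields \eqref{eq:globaldescentproperty} for all $x,y\in C$.

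The main obstacle is making sure the far case has finite constants, i.e.\ justifying the uniform bound $L$ on the subgradients over $C$. I would handle it by taking the finite cover's closure inside $U$ (shrinking radii so that $\overline{B}_{2r}(\bar x)\subset U$). Local Lipschitz continuity then holds on a compact set containing $C$, and the subgradient bound follows from the cited boundedness of Clarke subdifferentials.
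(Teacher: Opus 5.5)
Your proof is correct and follows essentially the same route as the paper. Both use a finite subcover of balls on which the local inequality \eqref{eq:descentproperty} holds, and both split into close and far pairs at a uniform radius; your half-radius trick with $\delta=\min_i r_{\bar x_i}$ is the standard proof of the Lebesgue number property the paper invokes, and your far-pair bound via $\max_C|\varphi|$ and the subgradient bound $L$ is what the paper packages into the single supremum $M$. One small point: take local Lipschitz continuity directly from Definition~\ref{def:upperC2}, not from Proposition~\ref{Prop:CharUpperC2}(c), since that proposition already assumes it.
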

\begin{proof}
As $\varphi$ is locally Lipschitz continuous, $\partial \varphi$ is locally bounded, hence, on the compact set $C$, there exists a constant $G$ satisfying $\|w \| \leq G$ for all $w \in \partial \varphi(x)$, for all $x \in C$. 

	As $\varphi$ is upper-$\mathcal{C}^2$, for every $\bar x \in C$, we find a constant $\kappa_{\bar x}$ and an open neighborhood $V_{\bar x}$, on which \eqref{eq:descentproperty} holds (with $\kappa = \kappa_{\bar x}$). Without loss of generality, we can assume $V_{\bar x} = B_{r_{\bar x}}(\bar x)$. Now, $\{B_{r_{\bar x}}(\bar x)\}_{\bar x \in C}$ is an open cover of $C$, and by compactness, we find a finite open subcover, $\{B_{r_i}(\bar x_i)\}_{i=1}^n$. Now, define
	\begin{equation*}
		M := \sup_{x, y \in C, w \in \partial \varphi(x)} \big\{\varphi(y) - \varphi(x) - \langle w, y -x\rangle\big\},
	\end{equation*}
	which is finite due to $\|w\| \leq G$ for all $w \in \partial \varphi(x)$, for all $x \in C$ and continuity in $x$ and $y$. Let $\kappa:= \max\{\kappa_{\bar x_1}, \dots, \kappa_{\bar x_n}, \frac{M}{r^2}\}$, where $r > 0$ is Lebesgue's number associated with the subcover and we assume $r< r_i$ for $i=1, \dots, n$.

	Let now $x, y\in C$ be arbitrary. Then there are two cases: First, assume $\|x-y\|\geq r$, then
	\begin{equation*}
		\varphi(y)-\varphi(x) - \langle w, y-x\rangle \leq M \leq \frac{M}{r^2} \| x- y\|^2 \leq \kappa \|x-y\|^2.
	\end{equation*}
	Secondly, if $\|x-y\| < r$, then, by definition of Lebesgue's number, there exists some $i \in \{1, \dots, n\}$, such that $x, y \in B_{r_i}(\bar x_i)$, hence 
	\begin{equation*}
		\varphi(y)-\varphi(x) - \langle w, y-x\rangle \leq \kappa_{\bar x_i} \| x- y\|^2 \leq \kappa \|x-y\|^2.
	\end{equation*}
	This completes the proof.
\end{proof}

If $C$ is a compact set, then we denote by $\kappa_C$ the constant such that \eqref{eq:globaldescentproperty} holds. For our auto-conditioned method later, we note that there are other cases where $\kappa$ in part (b) of Proposition~\ref{Prop:CharUpperC2} can be chosen as a global constant. For example, assume that $\varphi$ has a global representation as a minimum over a compact set, i.e. $\varphi(x) = \min_{c \in C} \varphi_c(x)$ for all $x \in U$ ($U \subseteq \X$ open) with $\varphi_c$ satisfying the properties as in Definition~\ref{def:upperC2}. Then, by \cite[Theorem 10.33]{RockafellarWets2009}, if $\rho$ is an upper bound to the Hessians $\| \nabla^2 \varphi_c(x)\|$ over $C \times U$, then $\varphi$ is upper-$\mathcal{C}^2$ with the (global) constant $\kappa = 2 \rho$.

The projection mapping has the following important properties that we cite from \cite{olikier2025}.

\begin{proposition}\label{prop:projproperties}
	Let $D \subset \X$ be nonempty and closed. Then for all $x \in D$, $v \in \X$, $y \in \proj_D(x-v)$, it holds that
	\begin{align}
		\|y-x\| \leq 2 \|v\|,\label{eq:projprop1}\\
		2 \langle v, y-x\rangle \leq - \| y-x \|^2,\label{eq:projprop2}
	\end{align}
	with strict inequalities in the case that $x \notin \proj_D(x-v)$.
\end{proposition}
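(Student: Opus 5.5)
The plan is to use only the defining minimality of $y \in \proj_D(x-v)$ together with the fact that $x \in D$ is an admissible competitor. Since $x \in D$, we have
\begin{equation*}
\|y-(x-v)\|^2 \leq \|x-(x-v)\|^2 = \|v\|^2 .
\end{equation*}
Expanding the left-hand side as $\|(y-x)+v\|^2 = \|y-x\|^2 + 2\langle v, y-x\rangle + \|v\|^2$ and cancelling $\|v\|^2$ gives $\|y-x\|^2 + 2\langle v,y-x\rangle \leq 0$. This is exactly \eqref{eq:projprop2}.

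For \eqref{eq:projprop1}, I would combine \eqref{eq:projprop2} with the Cauchy--Schwarz inequality:
\begin{equation*}
\|y-x\|^2 \leq -2\langle v,y-x\rangle \leq 2\|v\|\,\|y-x\|.
\end{equation*}
If $y=x$, the claim is trivial. Otherwise, dividing by $\|y-x\|>0$ gives $\|y-x\|\leq 2\|v\|$. Alternatively, the triangle inequality gives $\|y-x\| \leq \|y-(x-v)\| + \|v\| \leq 2\|v\|$ directly.

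The strictness claim is the only point needing care, and I expect it to be the main (mild) obstacle. Assume $x \notin \proj_D(x-v)$. Then $x$ is a feasible point of $D$ that is not a minimizer of the distance to $x-v$, while $y$ is one. Hence $\|y-(x-v)\| < \|x-(x-v)\| = \|v\|$ holds strictly. The expansion above then yields strict inequality in \eqref{eq:projprop2}. For \eqref{eq:projprop1}, the triangle-inequality route gives $\|y-x\| \leq \|y-(x-v)\| + \|v\| < 2\|v\|$ strictly. Note that $x\notin\proj_D(x-v)$ forces $v\neq 0$ and $y\neq x$, so no degenerate case remains.
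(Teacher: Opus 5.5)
Your proof is correct and complete. Testing the minimality of $y$ against the admissible competitor $x \in D$ gives $\|y-(x-v)\| \leq \|v\|$ (strictly when $x \notin \proj_D(x-v)$); expanding the square and applying the triangle inequality then yield both estimates and their strict versions, which is the standard argument for this result, one the paper does not prove itself but quotes from \cite{olikier2025}.
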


Let us also recall some notation from variational geometry from \cite{RockafellarWets2009}. In the following $D \subset \X$ is assumed to be closed and nonempty. We define the \emph{tangent cone} of $D$ at $\bar x \in D$ as
\begin{equation*}
	T_D(\bar x) := \Big\{ d \in \X\, \Big|\, \exists \{ \xk \} \subset D, t_k \searrow 0, \xk \to \bar x, \frac{\xk - \bar x}{t_k} \to d\Big\}.
\end{equation*}
Further, the \emph{regular (or Fréchet) normal cone} is given by 
\begin{equation*}
	N_D^F(\bar x) = T_D(\bar x)^\circ := \Big\{ v \in X \, \Big|\, \langle v, d \rangle \leq 0\, \forall d \in T_D(\bar x)\Big\}.
\end{equation*}
and the \emph{limiting normal cone} is defined as 
\begin{equation*}
	N_D^M(\bar x) = \Big\{ v \in \X\, \Big|\, \exists \{\xk\} \subset C, \{ v^k\}: \xk \to \bar x, v^k \in N^F_D(\bar x), v^k \to v\Big\}.
\end{equation*}
Lastly, we define $N^P_D(\bar x)$ as the \emph{proximal normal cone} to $D$ at $\bar x$ as follows: 
\begin{equation*}
	N^P_D(\bar x) =\Big\{v \in \X\, \Big|\, \exists \bar \tau >0: \bar x \in \proj_D( \bar x+\bar \tau v)\Big\}.
\end{equation*}
Note that $v$ being proximal normal to $D$ at $x$ implies $\proj_D(x + \tau v) = \{x\}$ for all $\tau \in [0, \bar \tau)$. The proximal normal cone $N^P_D(\bar x)$ is a convex cone and the following inclusions hold and are in general strict (see, e.g. \cite{olikier2025}):
\begin{equation}\label{eq:normalinclusions}
	N_D^P(\bar x) \subseteq N_D^F(\bar x) \subseteq N_D^M(\bar x).
\end{equation}

Next, we introduce our notion of critical points. Let $\varphi$ be upper-$\mathcal{C}^2$ and let $D$ be a nonempty, closed set. We call $\bar x \in D$ a \emph{proximal critical point}, if it holds that
\begin{equation}\label{eq:proxcritical}
	0 \in \partial \varphi(\bar x) + N^P_D(\bar x).
\end{equation}
Equivalently, there exists some $w^* \in \partial \varphi(\bar x)$ such that $-w^* \in N^P_D(\bar x)$.

Below, in Proposition~\ref{prop:necoptcond}, we will show that a more restrictive version of criticality is a necessary condition for local minimizers of our optimization problem. Namely, if $\bar x$ is a local minimizer for \eqref{eq:optproblem}, then
\begin{equation*}
	- \partial \varphi(\bar x) \subseteq N^P_D(\bar x).
\end{equation*}

We note that, if $D$ is convex, both concepts are related to the respective terms in DC programming (see e.g.\ \cite{thi2026}) and, taking the results from Proposition~\ref{Prop:CharUpperC2} into account, one can establish a link between optimization problems for upper-$\mathcal{C}^2$ functions and DC programs.

Even the notion for critical points from \eqref{eq:proxcritical} here is stronger than usual necessary optimality conditions. In general, as $\varphi$ is locally Lipschitz continuous, by the sum-rule of the limiting subdifferential, one obtains $\partial (\varphi + \delta_D)(x) \subseteq \partial \varphi(x) + N_D^M(x)$, which together with the optimality condition $0 \in \partial (\varphi + \delta_D)(x)$ guarantees that there exists an element $w \in \partial \varphi(x)$, such that $-w \in N_D^M(x)$. In the upcoming proposition, however, by properties of the projection, the limiting normal cone $N_D^M(x)$ can be replaced by the smaller proximal normal cone $N_D^P(x)$. Secondly, the upper-$\mathcal{C}^2$ property guarantees that every negative subgradient is an element of the normal cone.

\begin{proposition}\label{prop:necoptcond}
	Under our usual assumptions, let $x \in D$ be a local minimum of \eqref{eq:optproblem}. Then for all $w \in \partial \varphi(x)$ it holds that $-w \in N^P_D(x)$.
\end{proposition}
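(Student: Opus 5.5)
We argue directly from the definition of the proximal normal cone. Fix $w \in \partial \varphi(x)$; it suffices to find $\bar\tau > 0$ with $x \in \proj_D(x - \bar\tau w)$, i.e.
\begin{equation*}
	\|x - \bar\tau w - y\| \geq \|\bar\tau w\| \quad \text{for all } y \in D.
\end{equation*}
Expanding the square, this is equivalent to
\begin{equation*}
	2\bar\tau \langle w, y - x\rangle + \|y-x\|^2 \geq 0 \quad \text{for all } y \in D .
\end{equation*}
So the goal is a lower bound on $\langle w, y-x\rangle$ of the form $-\frac{1}{2\bar\tau}\|y-x\|^2$, uniformly over $y \in D$.

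\textbf{Points of $D$ near $x$.} By Proposition~\ref{Prop:CharUpperC2}(b), applied at $\bar x = x$, there are $\kappa \geq 0$ and a neighborhood $V$ of $x$ such that the descent inequality \eqref{eq:descentproperty} holds for all points of $V$ and all subgradients. Since $x$ is a local minimizer, we may shrink $V$ to a ball $B_r(x)$ on which also $\varphi(y) \geq \varphi(x)$ for all $y \in D \cap B_r(x)$. Apply \eqref{eq:descentproperty} with base point $x$ and the given $w \in \partial\varphi(x)$; this is exactly where it matters that the inequality holds for every subgradient. For $y \in D \cap B_r(x)$ we get
\begin{equation*}
	0 \leq \varphi(y) - \varphi(x) \leq \langle w, y-x\rangle + \kappa\|y-x\|^2 ,
\end{equation*}
hence $\langle w, y - x\rangle \geq -\kappa \|y-x\|^2$. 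Any $\bar\tau \le 1/(2\kappa)$ (any $\bar\tau>0$ if $\kappa = 0$) then gives the required inequality on $D \cap B_r(x)$.

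\textbf{Points of $D$ far from $x$.} For $y \in D$ with $\|y-x\| \geq r$ we use only Cauchy--Schwarz:
\begin{equation*}
	2\bar\tau\langle w, y-x\rangle + \|y-x\|^2 \geq \|y-x\|\bigl(\|y-x\| - 2\bar\tau\|w\|\bigr),
\end{equation*}
which is nonnegative as soon as $\bar\tau \leq r/(2\|w\|)$ (no restriction if $w=0$).

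Choosing $\bar\tau := \min\{1/(2\kappa),\, r/(2\|w\|)\}$, with the obvious convention when a denominator vanishes, makes both cases hold, so $-w \in N^P_D(x)$. This works for both choices of $\partial$, since Proposition~\ref{Prop:CharUpperC2}(b) is stated for either subdifferential.

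The only delicate point is the local step: the descent inequality must be applied at the base point $x$ with the arbitrary subgradient $w$. This direction of \eqref{eq:descentproperty} yields the one-sided bound we need, and the upper-$\mathcal{C}^2$ property is what lets us conclude for all $w$ rather than for some $w$. The global part is routine because the neighborhood radius $r$ is fixed before $\bar\tau$ is chosen.
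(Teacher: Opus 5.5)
Your proof is correct. It is essentially the contrapositive of the paper's argument, written as a direct verification.

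The paper argues by contradiction. It assumes $-w \notin N^P_D(x)$ and picks $y \in \proj_D(x-\tau w)$ for small $\tau$. It then uses the strict forms of \eqref{eq:projprop1} (to place $y$ in $B_\rho(x)$) and \eqref{eq:projprop2}, together with the uniform constant $\kappa_{\clball_\rho(x)}$ from Lemma~\ref{lemma:upperc2compact}, to obtain $\varphi(y)<\varphi(x)$. This contradicts local minimality.

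You instead check the defining inequality $2\bar\tau\langle w, y-x\rangle + \|y-x\|^2 \geq 0$ for every $y \in D$. Your far-point Cauchy--Schwarz step does the job of \eqref{eq:projprop1}, and your near-point estimate is the paper's descent computation run forwards. Even your threshold $\min\{1/(2\kappa), r/(2\|w\|)\}$ matches the paper's bound on $\tau$.

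Your version has some advantages:
\begin{itemize}
\item It produces an explicit $\bar\tau$.
\item It needs only the local descent inequality at the single base point $x$, so neither Lemma~\ref{lemma:upperc2compact} nor the strict-inequality refinement of Proposition~\ref{prop:projproperties} is required.
\item It builds in explicitly that the radius lies inside the neighborhood of local minimality. The paper leaves this to the arbitrariness of $\rho$.
\end{itemize}

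The paper's version has its own advantage: it reads as a statement about the algorithm. If some $-w$ fails to be a proximal normal, then every sufficiently short projected subgradient step from $x$ strictly decreases $\varphi$. It also reuses exactly the tools that drive Proposition~\ref{prop:welldef} and the subsequent convergence analysis.
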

\begin{proof}
	By contradiction, assume that there exists some $w \in \partial \varphi(x)$ such that $-w\notin N^P_D(x)$. Let $\rho \in (0, \infty)$. Then for all $\tau \in \left(0, \frac{\rho}{2 \|w\|} \right]$, it holds by \eqref{eq:projprop1} that
	\begin{equation*}
		x \notin \proj_D(x- \tau w) \subseteq B_{2 \tau \| w\|}(x) \subseteq B_\rho(x).
	\end{equation*}
	Hence, now for all $\tau \in \left( 0, \min\left\{ \frac{\rho}{2 \| w\|}, \frac{1}{2\kappa_{\clball_\rho(x)}}\right\}\right)$ and $y \in \proj_D(x-\tau w)$ (which implies $y \in B_\rho(x)$), we have by means of Lemma~\ref{lemma:upperc2compact} and \eqref{eq:projprop2} that
	\begin{equation*}
		\varphi(y) - \varphi(x) \leq \langle w, y-x\rangle + \kappa_{\clball_\rho(x)} \|y-x\|^2 < \left(\frac{-1}{2\tau} + \kappa_{\clball_\rho(x)}\right) \|y-x\|^2 \leq 0,
	\end{equation*}
	which contradicts the fact that $x$ was a local minimum.
\end{proof}

By the inclusions in \eqref{eq:normalinclusions}, we can also define weaker notions of criticality depending on the normal cone in use. This way, we define
\begin{itemize}
 \item \emph{F-criticality}: $0\in \partial \varphi(\bar{x}) + N^F_D(\bar{x})$.
 \item \emph{M-criticality}: $0\in \partial \varphi(\bar{x}) + N^M_D(\bar{x})$.
 \item \emph{C-criticality}: $0\in \partial \varphi(\bar{x}) + N^C_D(\bar{x})$, where the Clarke normal cone $N^C_D(\bar{x})$ is defined as the closed convex hull of $N^M_D(\bar{x})$.
\end{itemize}
These definitions are generalizations of the respective stationarity conditions for a smooth objective, cf.\ \cite{olikier2025}. The traditional way to define $*$-stationarity for $*\in \{F,M,C,P\}$ would be to have
\begin{align*}
	0\in \partial^{*}(\varphi+\delta_D)(\bar{x}),
\end{align*}
which is in general stronger than criticality as defined above. In our analysis we obtain results for P-criticality, as we utilize the robustness of the limiting subdifferential applied to the sequence of subgradients $w^k\in \partial \varphi(x^k)$ in our algorithm.

\section{Nonmonotone Linesearch Method}\label{Sec:AlgorithmNL}

The standard projected gradient method for the minimization of a continuously differentiable 
function $ \varphi $ over a closed nonempty set $D$ is based on the iteration
\begin{equation}\label{eq:opt-iteration}
	\xkp \in \proj_D(\xk - \tauk \nabla \varphi(\xk)),
\end{equation}
with a stepsize $ \tau_k $ usually chosen by a backtracking linesearch, i.e. by multiplying it with some $\beta \in (0,1)$ from an initial estimate in $[\taumin, \taumax]$ until a suitable linesearch criterion holds. The most common linesearch condition is the Armijo rule 
\begin{equation}\label{eq:Armijo}
	\varphi (\xkp) \leq \varphi (\xk) - \sigma \tau_k
	\| \nabla \varphi (\xk) \|^2
\end{equation}
for some constant $ \sigma \in (0,1) $. The descent method considered 
in this section is a direct generalization of this approach to the class
of objective functions $ \varphi $ which are upper-$\mathcal{C}^2 $. In the
monotone version, it is based on the iteration \eqref{eq:opt-iteration} for arbitrary elements $ \wk \in 
\partial \varphi (\xk) $ and $\yk \in \proj_D(\xk - \tauk \wk)$. The counterpart of the (monotone) Armijo rule 
\eqref{eq:Armijo} reads
\begin{equation}\label{eq:ArmijoNonmonotone}
	\varphi(\yk) \leq \varphi(\xk) + \sigma \langle \wk, \yk - \xk \rangle,
\end{equation}
for some $ \sigma \in (0,1)$.

The following result similar to \cite[Proposition 4.3]{aragon2025} and \cite{olikier2025} shows that this Armijo-type condition is satisfied for all sufficiently small stepsizes provided that $ \varphi $ is indeed upper-$\mathcal{C}^2$. Note that the interval in the definition of $\bar \rho$ is non-empty by boundedness of the subdifferential over compact sets.

\begin{proposition}\label{prop:welldef}
	Let $\bar x \in D$, $\taumax \in (0, \infty)$, $\sigma \in (0,1)$ and $\rho \in (0, \infty)$. Define
	\begin{align*}
		\bar \rho \in \left[ \rho + 2 \tau \sup_{x \in \clball_\rho(\bar x) \cap D, w \in \partial \varphi(x)} \|w\|, \infty \right) \text{ and } \tau^* = \frac{1-\sigma}{2 \kappa_{\clball_{\bar \rho}(\bar x) \cap D}}.
	\end{align*}
	Then, for all $x \in \clball_{\rho}(\bar x) \cap D$, $\tau \in [0, \min\{\tau^*, \taumax\}]$, $y \in \proj_D(x- \tau w), w \in \partial \varphi(x)$, it holds that
	\begin{equation}\label{eq:welldef}
		\varphi(y) \leq \varphi(x) + \sigma \langle w, y-x\rangle.
	\end{equation}
\end{proposition}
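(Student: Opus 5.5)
Fix $x \in \clball_\rho(\bar x)\cap D$, $w \in \partial\varphi(x)$, $\tau \in [0,\min\{\tau^*,\taumax\}]$ and $y \in \proj_D(x-\tau w)$. The approach is to combine the global descent inequality of Lemma~\ref{lemma:upperc2compact} on the compact set $C := \clball_{\bar\rho}(\bar x)\cap D$ with the two projection estimates of Proposition~\ref{prop:projproperties}, applied with $v = \tau w$.

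The trivial case comes first. If $\tau = 0$ or $y = x$, then \eqref{eq:welldef} reads $\varphi(x)\le\varphi(x)$, so there is nothing to show. From now on assume $y \neq x$, hence $\tau>0$.

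Next I check that both $x$ and $y$ lie in $C$, so that the constant $\kappa_C$ applies. This is clear for $x$, since $\rho \le \bar\rho$. For $y$, note that $y \in D$ by definition of the projection. Moreover, \eqref{eq:projprop1} gives $\|y-x\|\le 2\tau\|w\|$, and therefore
\[
\|y-\bar x\| \le \rho + 2\tau \sup_{x' \in \clball_\rho(\bar x)\cap D,\, w'\in\partial\varphi(x')}\|w'\| \le \bar\rho .
\]
Here I read the $\tau$ in the definition of $\bar\rho$ as $\taumax$, which is the natural reading since $\bar\rho$ must be independent of $\tau$; the bound holds because $\tau\le\taumax$. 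This bookkeeping around the definition of $\bar\rho$ is the only mildly delicate point. The supremum is finite because $\partial\varphi$ is bounded on compact sets, and $C$ is compact, so Lemma~\ref{lemma:upperc2compact} is applicable. Strictly, that lemma needs $C \subset U$; I take $U=\X$, i.e.\ $\varphi$ upper-$\mathcal{C}^2$ on a neighborhood of $C$, as part of the standing assumptions.

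Now the estimate. Lemma~\ref{lemma:upperc2compact} on $C$ gives
\[
\varphi(y)\le \varphi(x)+\langle w,y-x\rangle+\kappa_C\|y-x\|^2 .
\]
Inequality \eqref{eq:projprop2} with $v=\tau w$ gives $\|y-x\|^2 \le -2\tau\langle w,y-x\rangle$. Note that $\langle w,y-x\rangle\le 0$, and that $\kappa_C\ge0$ may be assumed. Substituting yields
\[
\varphi(y)\le\varphi(x)+(1-2\tau\kappa_C)\langle w,y-x\rangle .
\]
Since $\tau\le\tau^* = \frac{1-\sigma}{2\kappa_C}$, we have $1-2\tau\kappa_C\ge\sigma$. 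Because $\langle w,y-x\rangle\le0$, this gives $(1-2\tau\kappa_C)\langle w,y-x\rangle\le\sigma\langle w,y-x\rangle$, which is \eqref{eq:welldef}. If $\kappa_C=0$, then $\tau^*=\infty$ and the same chain applies directly.
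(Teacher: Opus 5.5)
Your proof is correct and follows the paper's argument essentially step by step. The triangle inequality with \eqref{eq:projprop1} places $y$ in $\clball_{\bar\rho}(\bar x)$, and then Lemma~\ref{lemma:upperc2compact} combined with \eqref{eq:projprop2} and $\tau\le\tau^*$ gives \eqref{eq:welldef}. Your reading of the $\tau$ in the definition of $\bar\rho$ as $\taumax$ is what the paper's proof implicitly uses, since it bounds $2\tau\|w\|+\rho\le\bar\rho$ for all $\tau\in[0,\taumax]$.
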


\begin{proof}
	For all $x \in \clball_{\rho}(\bar x) \cap D$, $\tau \in [0, \taumax]$, $y \in \proj_D(x- \tau w), w \in \partial \varphi(x)$, it holds by the triangle inequality and \eqref{eq:projprop1} that
	\begin{equation*}
		\| y- \bar x\| \leq \|y-x\|+\|x-\bar x\| \leq 2 \tau \| w\| + \rho \leq \bar \rho, 
	\end{equation*}
	which in turn implies $\proj_D(x - \tau w) \subseteq \clball_{\bar \rho}(\bar x)$. Consequently, we have for all $x \in \clball_{\rho}(\bar x) \cap D$, $\tau \in [0, \min\{\tau^*, \taumax\}]$, $y \in \proj_D(x- \tau w), w \in \partial \varphi(x)$, that by Lemma~\ref{lemma:upperc2compact} the following inequalities hold:
	\begin{align*}
		\varphi(y) &\leq \varphi(x) + \langle w, y-x\rangle + \kappa_{\clball_{\bar \rho}(\bar x)} \| y-x\|^2\\
		&\leq \varphi(x) + (1- 2 \tau \kappa_{\clball_{\bar \rho}(\bar x)} ) \langle w, y-x\rangle\\
		&\leq \varphi(x) + \sigma \langle w, y-x\rangle.
	\end{align*}
	The final inequality exploits the fact that $\langle w, y-x\rangle \leq 0$ in view of \eqref{eq:projprop2}.
\end{proof}

Note that in particular, the inner backtracking loop terminates with a stepsize contained in some compact interval
\begin{equation}\label{eq:intervaldef}
	\tau \in I := [\min\{ \tau_{\min}, \beta \tau^*\}, \taumax].
\end{equation}

We next provide a generalization to a nonmonotone version of the previous iteration. From \eqref{eq:welldef} we know that an algorithm with the iteration above and $\tauk$ determined by a backtracking linesearch with termination criterion as in \eqref{eq:welldef} will produce a monotonically decreasing sequence of function values $\{\varphi(\xk)\}_{k \in \mathbb{N}}$.

Now, \eqref{eq:welldef} will also hold if $\varphi(x)$ is replaced by an arbitrary upper bound. Therefore, we assume that we have reference values $\Rvalue_k \geq \varphi(\xk)$. Then Proposition \ref{prop:welldef} guarantees finite termination of a backtracking linesearch with the corresponding termination criterion
\begin{equation}\label{eq:linesearchtermination}
	\varphi(\yk) \leq \Rvalue_k + \sigma \langle \wk, \yk - \xk \rangle,
\end{equation}
with $\yk \in \proj_D(\xk - \tauk \wk)$. Hence, the linesearch for the stepsize parameter is well-defined for upper-$\mathcal{C}^2$ functions.

Crucially, the reason why nonmonotone methods may outperform their monotone counterparts in applications is that \eqref{eq:linesearchtermination} may allow for larger steps compared to a monotone method.

Before presenting the algorithmic details, let us discuss possible choices for the reference values $\Rvalue_k$.
One popular choice is due to Grippo et al.\ \cite{GrippoLamparielloLucidi1986}, where $ \Rvalue_k := \max \{ \varphi(\xj) \mid j = k, k-1, \ldots, k - m_k \} $ for some given (but bounded) sequence $ m_k \in \N $. As $\Rvalue_k$ is the maximum function value over the last few $m_k+1$ iterates, we refer to this strategy as the \emph{max-rule}. The max-rule for unconstrained minimization of upper-$\mathcal{C}^2$ functions was studied in \cite{aragon2025}. There, stationarity of accumulation points was derived.

In our Algorithm~\ref{Alg:NonmonotoneSubgradient}, we employ the reference values as introduced by Zhang and Hager \cite{ZhangHager2004}. Here, $ \Rvalue_{k+1} $ is computed as a convex combination of the previous reference value $ \Rvalue_k $ and the new function value $ \varphi (\xkp) $. This method is called the \emph{mean-rule}. We show in the upcoming result Lemma \ref{lemma:genproperties} that \eqref{eq:descentcondition} holds. Hence, the $\Rvalue_k$ chosen by the mean-rule are indeed an upper bound for $\varphi(\xk)$ in our setting. The algorithmic details are presented in Algorithm~\ref{Alg:NonmonotoneSubgradient}.

\begin{algorithm}[Nonmonotone Projected Subgradient Method]\leavevmode
	\label{Alg:NonmonotoneSubgradient}
%\caption{Nonmonotone Subgradient Method}
\begin{algorithmic}[1]
	\Require $x^0 \in C$, $0<\tau_\mathrm{min} \leq \taumax < \infty$, $\sigma, \beta \in (0,1)$, $\pmin \in (0, 1]$.
	\State Set $\Rvalue_0 := \varphi(x^0)$.
	\For{$k = 0, 1, 2, \dots$}
	\State Choose $\wk \in \partial \varphi(\xk)$; \If{suitable termination criterion holds} \State STOP and return $\xk$.\EndIf
	%\State Choose $\dk \in \mathbb{R}^n \setminus \{0\}$ such that $\langle \wk, \dk\rangle < 0$.
	\State Choose $\tau_k \in [\taumin, \taumax]$ and $\yk \in \proj_D(\xk - \tauk \wk)$
	\While{$\varphi(\yk) > \Rvalue_k + \sigma \langle \wk, \yk - \xk \rangle$}
	\State $\tauk = \beta \tauk$
	\State Choose $\yk \in \proj_D(\xk - \tauk \wk)$.
	\EndWhile
	\State Set $\xkp := \yk$.
	\State Choose $p_{k+1} \in [\pmin, 1]$ and set $\Rvalue_{k+1} := (1-p_{k+1})\Rvalue_k + p_{k+1} \varphi(\xkp)$.
	\EndFor
\end{algorithmic}
\end{algorithm}

Let us first collect some properties of the sequence generated by Algorithm~\ref{Alg:NonmonotoneSubgradient}. These are similar to those obtained for a nonmonotone proximal gradient method in \cite{DeMarchi2023}, where the same linesearch rules were used.

\begin{lemma}\label{lemma:genproperties}
Let $\varphi: \X \to \mathbb{R}$ be upper-$\mathcal{C}^2$ on $\X$ with $\inf \varphi > - \infty$. Then for all $x^0 \in \X$, Algorithm~\ref{Alg:NonmonotoneSubgradient} either stops at some stationary point after a finite number of iterations, or the sequence $\{\xk\}_{k \in \mathbb{N}}$ satisfies the following properties:
\begin{itemize}
	\item[(a)] For all $k \in \mathbb{N}$ it holds that
		\begin{equation}\label{eq:descentcondition}
			\varphi(\xkp) + (1- p_{k+1}) \delta_k \leq \Rvalue_{k+1} \leq \Rvalue_k - p_{k+1} \delta_k,
		\end{equation}
		where 
		\begin{equation}\label{eq:deltak}
			\delta_k := -\sigma \langle \wk , \xkp - \xk \rangle \geq \frac{\sigma}{2 \taumax} \|\xkp - \xk\|^2 \geq 0,
		\end{equation}
		where the inequality follows from \eqref{eq:projprop2}.
	\item [(b)] The sequence $\{ \Rvalue_k\}$ is monotonically decreasing.
	\item [(c)] Both $\{ \Rvalue_k \}$ and $\{ \varphi(\xk)\}$ converge to some value $\varphi^* \in \mathbb{R}$, i.e., both sequences converge and
	have the same limit.
	\item [(d)] $\|  \xkp - \xk \| \to 0$ for $k \to \infty$.
\end{itemize}
\end{lemma}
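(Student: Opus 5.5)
We prove (a) by induction on $k$, together with the well-definedness of the linesearch. The induction keeps track of the inequality $\varphi(\xk) \leq \Rvalue_k$, which holds for $k=0$ since $\Rvalue_0 = \varphi(x^0)$.

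Suppose $\varphi(\xk)\le \Rvalue_k$. Then Proposition~\ref{prop:welldef}, applied with $\bar x = \xk$, gives
\[
\varphi(y) \leq \varphi(\xk) + \sigma\langle \wk, y-\xk\rangle \leq \Rvalue_k + \sigma\langle \wk, y-\xk\rangle
\]
for all sufficiently small $\tau$. Hence the while-loop terminates after finitely many reductions, and $\xkp$ satisfies \eqref{eq:linesearchtermination}, i.e. $\varphi(\xkp) \le \Rvalue_k - \delta_k$.

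The bound $\delta_k \ge \frac{\sigma}{2\tauk}\|\xkp-\xk\|^2 \ge \frac{\sigma}{2\taumax}\|\xkp-\xk\|^2$ follows from \eqref{eq:projprop2} with $v=\tauk \wk$, since $\tauk\le\taumax$.

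Now insert the termination criterion into the definition of $\Rvalue_{k+1}$:
\[
\Rvalue_{k+1} = (1-p_{k+1})\Rvalue_k + p_{k+1}\varphi(\xkp) \le \Rvalue_k - p_{k+1}\delta_k,
\]
which is the right inequality in \eqref{eq:descentcondition}. Conversely, $\Rvalue_k \ge \varphi(\xkp)+\delta_k$ gives
\[
\Rvalue_{k+1} \ge (1-p_{k+1})(\varphi(\xkp)+\delta_k) + p_{k+1}\varphi(\xkp) = \varphi(\xkp) + (1-p_{k+1})\delta_k,
\]
which is the left inequality. Since $\delta_k\ge 0$, this also yields $\varphi(\xkp)\le\Rvalue_{k+1}$ and closes the induction.

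If the algorithm stops finitely, the termination criterion gives the first alternative. Otherwise (a) holds for all $k$.

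Part (b) is immediate from the right inequality of \eqref{eq:descentcondition}, since $p_{k+1}\delta_k\ge0$.

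For (c), observe that $\Rvalue_k \ge \varphi(\xk)\ge \inf\varphi > -\infty$. So $\{\Rvalue_k\}$ is decreasing and bounded below, and converges to some $\varphi^*\in\R$.

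Summing the right inequality of \eqref{eq:descentcondition} gives
\[
\pmin\sum_k \delta_k \le \Rvalue_0 - \varphi^* < \infty,
\]
hence $\delta_k\to0$.

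From the left inequality and $\delta_k \ge 0$ we get $\varphi(\xkp)\le \Rvalue_{k+1}$. For a lower bound we use the definition of the mean rule: $p_{k+1}\varphi(\xkp) = \Rvalue_{k+1} - (1-p_{k+1})\Rvalue_k$, so
\[
\varphi(\xkp) = \Rvalue_k + \frac{\Rvalue_{k+1}-\Rvalue_k}{p_{k+1}}.
\]
Since $p_{k+1}\ge\pmin>0$ and $\Rvalue_{k+1}-\Rvalue_k\to0$, the right-hand side tends to $\varphi^*$. Therefore $\varphi(\xk)\to\varphi^*$.

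Finally, (d) follows from $\delta_k\to0$ together with \eqref{eq:deltak}:
\[
\|\xkp-\xk\|^2\le \tfrac{2\taumax}{\sigma}\delta_k\to0.
\]

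The main subtlety is the well-definedness step. Proposition~\ref{prop:welldef} needs $\xk\in D$, and this holds because each iterate is a projection onto $D$ (and $x^0$ is assumed feasible). The proposition is applied locally around $\xk$, so no boundedness of the iterates is needed at this stage. The rest is bookkeeping with the convex combination.
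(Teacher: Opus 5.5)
Your proof is correct and follows essentially the same route as the paper. Both inequalities in (a) come from inserting the linesearch criterion into the mean-rule update. Parts (b)--(c) follow from monotonicity, boundedness from below, and the identity $\varphi(\xkp)=\Rvalue_k+(\Rvalue_{k+1}-\Rvalue_k)/p_{k+1}$. Part (d) follows from telescoping the right inequality of \eqref{eq:descentcondition}. Your explicit induction on $\varphi(\xk)\le\Rvalue_k$, which guarantees that the backtracking loop terminates, usefully tightens a point the paper only argues informally (and somewhat circularly) in the text preceding Algorithm~\ref{Alg:NonmonotoneSubgradient}.
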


\begin{proof}
The first inequality of part (a) follows from the definition of $\Rvalue_{k+1}$ and the linesearch termination criterion in \eqref{eq:linesearchtermination}:
\begin{align*}
	\Rvalue_{k+1} &= (1-p_{k+1}) \Rvalue_k + p_{k+1} \varphi(\xkp) \\
	&\geq (1-p_{k+1}) \big(\varphi(\xkp) - \sigma \langle \wk , \xkp - \xk \rangle\big) + p_{k+1} \varphi(\xkp)\\
	&= \varphi(\xkp) - (1-p_{k+1})\sigma \langle \wk, \xkp - \xk \rangle.
\end{align*}
Similarly, we verify the second inequality:
\begin{align*}
	\Rvalue_{k+1} &= (1- p_{k+1}) \Rvalue_k + p_{k+1} \varphi(\xkp) \\
	&\leq (1-p_{k+1}) \Rvalue_k + p_{k+1} \big( \Rvalue_k + \sigma \langle \wk, \xkp - \xk \rangle \big)\\
	&= \Rvalue_k + p_{k+1} \sigma \langle \wk, \xkp - \xk \rangle.
\end{align*}
The second assertion directly follows from part (a) and we obtain for all $k$ that
\begin{equation*}
	\varphi(\xk) \leq \Rvalue_k \leq \dots \leq \Rvalue_0 = \varphi(x^0).
\end{equation*}
The last equation also shows that, as $\varphi$ is bounded from below, the sequence of reference values $\{\Rvalue_k\}$ is convergent to some limit $\varphi^*$. Now, as $p_k \geq \pmin$ and
\begin{equation*}
	\varphi(\xk) = \frac{1}{p_k} \big( \Rvalue_k - (1-p_k) \Rvalue_{k-1}
	\big) = \frac{1}{p_k} (\Rvalue_k - \Rvalue_{k-1}) + \Rvalue_{k-1}
\end{equation*}
due to the update of $ \Rvalue_k $,
the convergence of $\{\varphi(\xk)\}$ to the same limit $\varphi^*$ follows.

Statement (d) is now a consequence of part (a)
and the following telescoping argument:
\begin{equation}\label{eq:telescoperval}
	\infty > \Rvalue_0 - \varphi^* \geq \sum_{j=1}^k 
	\big( \Rvalue_{j-1} - \Rvalue_j \big) \geq \sum_{j=1}^k p_j \delta_{j-1} \geq \sum_{j=1}^k \pmin \frac{\sigma}{2 \taumax} \| \xj - \xjm \|^2,
\end{equation}
where we used \eqref{eq:descentcondition} and \eqref{eq:deltak}. The upper bound $\Rvalue_0 - \varphi^*$ does not depend on $k$ and therefore holds for all $k \in \mathbb{N}$.
\end{proof}

We note that \eqref{eq:telescoperval} in the proof above implies the following estimate similar to \cite[Proposition 4.4]{aragon2025}:

\begin{proposition}\label{prop:convest}
There exists a constant $c > 0$ such that
\begin{equation}
	\min_{0 \leq j \leq k} \| \xjp - \xj \| \leq \frac{c\sqrt{\Rvalue_0 - \varphi^*}}{\sqrt{k+1}}, \text{ for all } k \in \mathbb{N}.
\end{equation}
\end{proposition}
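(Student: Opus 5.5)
We take the chain of inequalities \eqref{eq:telescoperval} from the proof of Lemma~\ref{lemma:genproperties}(d) and bound the sum there from below by its smallest term. The constant will be $c := \sqrt{2\taumax/(\pmin\sigma)}$, which depends only on the algorithmic parameters. We assume the algorithm does not stop finitely, so Lemma~\ref{lemma:genproperties} applies and $\Rvalue_k \searrow \varphi^*$.

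\textbf{Step 1: the telescoped bound.} For any $k \in \N$, apply \eqref{eq:telescoperval} with upper summation index $k+1$ and shift the index $j \mapsto j+1$:
\begin{equation*}
	\Rvalue_0 - \varphi^* \geq \Rvalue_0 - \Rvalue_{k+1} = \sum_{j=0}^{k} \big(\Rvalue_j - \Rvalue_{j+1}\big) \geq \sum_{j=0}^{k} p_{j+1}\delta_j \geq \frac{\pmin \sigma}{2\taumax} \sum_{j=0}^{k} \|\xjp - \xj\|^2 .
\end{equation*}
The first inequality uses $\Rvalue_{k+1} \geq \varphi^*$, which holds because $\{\Rvalue_k\}$ is monotonically decreasing with limit $\varphi^*$ (Lemma~\ref{lemma:genproperties}(b),(c)). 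The remaining inequalities use \eqref{eq:descentcondition}, \eqref{eq:deltak}, and $p_{j+1} \geq \pmin$.

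\textbf{Step 2: minimum versus sum.} The sum has $k+1$ nonnegative terms, so it is at least $(k+1)\min_{0\le j\le k}\|\xjp-\xj\|^2$. Combining this with Step 1 gives
\begin{equation*}
	\min_{0 \leq j \leq k}\|\xjp - \xj\|^2 \leq \frac{2\taumax}{\pmin\sigma}\cdot\frac{\Rvalue_0-\varphi^*}{k+1}.
\end{equation*}
Taking square roots yields the claim with $c=\sqrt{2\taumax/(\pmin\sigma)}$.

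There is no real obstacle. The only points needing care are the index bookkeeping, so that the sum runs over exactly $k+1$ terms, and justifying $\Rvalue_{k+1}\ge\varphi^*$, which comes from monotonicity of $\{\Rvalue_k\}$ rather than from the lower bound $\inf\varphi>-\infty$.
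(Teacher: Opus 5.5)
Your proof is correct and follows essentially the same route as the paper: you apply the telescoped estimate \eqref{eq:telescoperval} over $k+1$ terms, bound the sum from below by $(k+1)$ times the smallest squared step, and take square roots, obtaining the same constant $c=\sqrt{2\taumax/(\pmin\sigma)}$. Your explicit justification of $\Rvalue_{k+1}\ge\varphi^*$ from the monotone decrease of $\{\Rvalue_k\}$ fills in a detail the paper leaves implicit.
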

\begin{proof}
By \eqref{eq:telescoperval}, we obtain
\begin{equation*}
	\min_{0 \leq j \leq k} \| \xjp - \xj \|^2 \leq \frac{1}{k+1} \frac{2 \taumax}{\pmin \sigma}\sum_{j=1}^{k+1} \pmin \frac{\sigma}{2 \taumax} \| \xj - \xjm \|^2
	\leq \frac{1}{k+1} \frac{2 \taumax}{\pmin \sigma} \big(\Rvalue_0 - \varphi^* \big).
\end{equation*}
Taking the square root gives the claim with $c:= \sqrt{\frac{2 \taumax}{\pmin \sigma}}$.
\end{proof}

Next, we establish our global convergence result for the sequence $\{\xk\}$ generated by Algorithm~\ref{Alg:NonmonotoneSubgradient}. This result is inspired by \cite{olikier2025}, where the smooth case was considered.

\begin{theorem}\label{thm:globalconvergence}
	Let $\varphi: \X \to \mathbb{R}$ be upper-$\mathcal{C}^2$ with $\inf \varphi > - \infty$. Let $\{\xk\}$ be a sequence generated by Algorithm~\ref{Alg:NonmonotoneSubgradient}, given some $x^0 \in \X$. Then, either the method stops at a point with $-\wk \in N^P_D(\xk)$ after a finite number of iterations, or the following assertions hold for $\{\xk\}$:
	\begin{enumerate}
		\item Any accumulation point $x^*$ of the sequence of iterates is a proximal critical point. That is, there exists an $w^* \in \partial \varphi(x^*)$ such that $-w^* \in N^P_D(x^*)$.
		\item Let $x^*$ be an accumulation point of $\{\xk\}$. Then the sequence $ \{ \varphi (\xk) \} $ converges to $\varphi (x^*) $ and the sequence $ \{ \Rvalue_k \} $ converges monotonically to $ \varphi (x^*) $.
	\end{enumerate}
\end{theorem}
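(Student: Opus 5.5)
Fix an accumulation point $x^*$ and a subsequence $x^k \to_K x^*$. Then $x^* \in D$ because $D$ is closed. The plan has three steps: extract a limiting subgradient $w^*$, show $x^*\in\proj_D(x^*-\bar\tau w^*)$ for some $\bar\tau>0$ by passing to the limit in the projection inequality, and then read off assertion 2 from continuity.

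First, the subgradients along the subsequence are bounded, since $\partial\varphi$ is bounded on the bounded set $\{x^k\}_{k\in K}\cup\{x^*\}$. The stepsizes satisfy $\tau_k\in I=[\min\{\taumin,\beta\tau^*\},\taumax]$ by Proposition~\ref{prop:welldef} and \eqref{eq:intervaldef}. Here I take $\bar x = x^*$ and some $\rho>0$, so that all $x^k$ with $k\in K$ large lie in $\clball_\rho(x^*)\cap D$. This needs one care point: $\tau^*$ depends on $\rho$ and $\bar x$, but one fixed ball suffices for the tail of the subsequence. Passing to a further subsequence, we get $w^k\to w^*$ and $\tau_k\to\bar\tau\ge \min\{\taumin,\beta\tau^*\}>0$. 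By the robustness property, $w^*\in\partial\varphi(x^*)$. By Lemma~\ref{lemma:genproperties}(d), $x^{k+1}\to x^*$ along the same subsequence.

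Second, we pass to the limit in the projection inequality. Since $x^{k+1}\in\proj_D(x^k-\tau_k w^k)$, for every $z\in D$ we have
\begin{equation*}
\|x^{k+1}-(x^k-\tau_k w^k)\|\le \|z-(x^k-\tau_k w^k)\|.
\end{equation*}
Letting $k\to_K\infty$ gives $\|\bar\tau w^*\|\le\|z-(x^*-\bar\tau w^*)\|$ for all $z\in D$. Hence $x^*\in\proj_D(x^*-\bar\tau w^*)$, i.e.\ $-w^*\in N^P_D(x^*)$ with this $\bar\tau>0$. This proves assertion 1. The finite-termination case works the same way: stopping means $x^k\in\proj_D(x^k-\tau w^k)$, which is exactly $-w^k\in N^P_D(x^k)$ by definition.

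Third, for assertion 2: $\varphi$ is locally Lipschitz, hence continuous, so $\varphi(x^k)\to_K\varphi(x^*)$. Lemma~\ref{lemma:genproperties}(c) says the full sequences $\{\varphi(x^k)\}$ and $\{\Rvalue_k\}$ converge to a common limit $\varphi^*$, so $\varphi^*=\varphi(x^*)$. Monotonicity of $\{\Rvalue_k\}$ is Lemma~\ref{lemma:genproperties}(b).

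The main obstacle is the uniform positive lower bound on $\tau_k$. Without it the limit $\bar\tau$ could be $0$, and the limiting relation $x^*\in\proj_D(x^*)$ would be vacuous. This is where the upper-$\mathcal{C}^2$ property enters, through Proposition~\ref{prop:welldef} applied on the fixed compact neighborhood of $x^*$. The backtracking argument there needs the inequality for all stepsizes $\tau\le\min\{\tau^*,\taumax\}$ simultaneously and for arbitrary projection selections $y^k$. Proposition~\ref{prop:welldef} provides this, so the final accepted $\tau_k$ is at least $\beta\min\{\tau^*,\taumax\}$ or the initial trial value $\ge\taumin$.
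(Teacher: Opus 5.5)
Your proposal is correct and follows essentially the same route as the paper: a uniform positive lower bound on $\tau_k$ from Proposition~\ref{prop:welldef} on a fixed ball around $x^*$, extraction of convergent subsequences $\tau_k \to \bar\tau > 0$ and $w^k \to w^*$, and passage to the limit in the projection relation. You pass to the limit in the inequality against every $z \in D$, whereas the paper uses continuity of the distance function; these are equivalent. You then use robustness of $\partial\varphi$ for $w^* \in \partial\varphi(x^*)$, and continuity of $\varphi$ together with Lemma~\ref{lemma:genproperties} for assertion 2, exactly as the paper does, while spelling out the backtracking argument for the stepsize bound in somewhat more detail.
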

\begin{proof}
	Let $\{\xk\}_{k \in K}$ be a subsequence converging to $x^* \in C$. Let $\rho \in (0, \infty)$ and define $\bar \rho$, $\tau^*$ and the interval $I$ as before in \eqref{eq:intervaldef}. 
	For all $k \in K$ sufficiently large, it holds that $\xk \in \clball_{\rho}(x^*)$ and hence, we have $\xkp \in \proj_D(\xk - \tau^k \wk)$ for $\tau^k \in I$. This gives in particular
	\begin{equation}\label{eq:projdist1}
		\| \xkp - (\xk - \tau^k \wk ) \| = \dist(\xkp - \tau^k \wk, D).
	\end{equation}
	By taking a further subsequence, we can assume that $\tau^k$ converges to some $\tau>0$, as $I$ is compact and that $\{\wk\}$ converges as $\wk \to w^*$ by (local) boundedness of the subdifferential. By Lemma~\ref{lemma:genproperties} it holds that $\xkp$ converges to $x^*$, too. Now, taking the limit on the subsequence, we obtain from \eqref{eq:projdist1} that
	\begin{equation}\label{eq:stationarypoint1}
		\| x^* - (x^* - \tau w^* )\| = \dist(x^* - \tau w^*, D),
	\end{equation}
	by continuity of the distance function. This shows that $-w^* \in N^P_D(x^*)$. As $w^*$ is the limit of the subsequence $\wk \in \partial \varphi(\xk)$, possibly on a further subsequence, the robustness property of the limiting (or Clarke) subdifferential gives $w^* \in \partial \varphi(x^*)$. Together with $-w^* \in N^P_D(x^*)$, we get $0 \in \partial \varphi(x^*) + N^P_D(x^*)$.

	We already know that $\{\varphi(\xk)\}$ converges. However, on a subsequence, we have $\varphi(\xk) \to_K \varphi(x^*)$ by continuity of $\varphi$. Thus, the last claim concerning the convergence of $\{\varphi(\xk)\}$ and $\{\Rvalue_k\}$ follows.
\end{proof}

The proof of Theorem~\ref{thm:globalconvergence} reveals that the assertions above follow from two crucial properties: First, locally, the stepsize $\tauk$ stays in some compact interval $I \subseteq \mathbb{R}_+$ and secondly, it holds that $\|\xkp - \xk \| \to 0$. In the next section, under additional assumptions, we present a linesearch-free method that shares these properties.

\section{Auto-Conditioned Projected Subgradient Method}\label{Sec:AlgorithmAC}

In this section, we also propose an auto-conditioned projected subgradient method and derive corresponding convergence guarantees. The term auto-conditioned refers to the fact that, in particular, our method does not need any problem dependent parameters (such as the constant $\kappa$) and does not utilize any linesearch procedures. This type of stepsize strategies use a direct formula to compute the stepsize. Probably the most prominent example in the literature is the adaptive gradient method for convex optimization due to \cite{malitsky2020}. The method in \cite{malitsky2020} is fully adaptive and does not assume a global Lipschitz constant for the gradient. However, in the nonconvex case, corresponding results are harder to obtain. Our method is inspired by \cite{lan2026} and \cite{yagishita2025}, where the same stepsize strategy we use was employed in the context where $\varphi$ is a differentiable and globally Lipschitz-smooth function. The latter assumption seems to be necessary for nonconvex objectives. In the smooth setting, the common motivation behind these auto-conditioned algorithms is that their stepsizes are calculated using a formula to approximate (a multiple of) the inverse of the (local) Lipschitz constant of the gradient. Our method is presented in Algorithm~\ref{Alg:ACPSubgradient}. Note further that in contrast to the linesearch we introduced in the preceding section, the auto-conditioned stepsize is decreasing, making the method less adaptive to the local geometry. The main advantage, however, is that no possibly costly backtracking steps are employed. Algorithm~\ref{Alg:NonmonotoneSubgradient} features a two-loop structure, whereas for Algorithm~\ref{Alg:ACPSubgradient} a single loop is sufficient.

\begin{algorithm}[Auto-Conditioned Projected Subgradient Method]\leavevmode
	\label{Alg:ACPSubgradient}
%\caption{Nonmonotone Subgradient Method}
\begin{algorithmic}[1]
	\Require $x^0 \in C$, $\kappa_0 > 0$, $\alpha > 1$.
	\For{$k = 0, 1, 2, \dots$}
	\State Choose $\wk \in \partial \varphi(\xk)$; \If{suitable termination criterion holds} \State STOP and return $\xk$.\EndIf
	\State Set $\gamma_k = \max \{\kappa_0, \dots, \kappa_k\}$ and $\tauk = \frac{1}{2 \alpha \gamma_k}$.
	\State Choose $\xkp \in \proj_D(\xk - \tauk \wk)$.
	\State Set \begin{equation}\label{eq:defkappa}\kappa_{k+1} = \frac{\varphi(\xkp) - \varphi(\xk) - \langle \wk, \xkp - \xk\rangle}{\|\xkp - \xk \|^2}.\end{equation}
	\EndFor
\end{algorithmic}
\end{algorithm}

The formula \eqref{eq:defkappa} is motivated by part (b) in Proposition~\ref{Prop:CharUpperC2}, and provides an estimate for the constant $\kappa$. We will assume that Algorithm~\ref{Alg:ACPSubgradient} generates an infinite sequence of iterates. In particular, if $\|\xkp - \xk \| = 0$, then by \eqref{eq:proxcritical}, $\xk$ is already proximal critical and the algorithm can be stopped before $\kappa_{k+1}$ is calculated in \eqref{eq:defkappa}. 

In the motivating work \cite{yagishita2025}, the Algorithm~\ref{Alg:ACPSubgradient} was proposed for a proximal gradient method for composite problems, where one part of the objective is smooth. Let us note that the following property is independent of the choice of descent direction and hence carries over to our setting. First, observe that the sequence $\{\gamma_k\}$ is monotonically increasing and introduce the notation
\begin{equation*}
	S := \left\{k \in \N\, \Big|\, \frac{\alpha + 1}{2} \gamma_k \geq \kappa_{k+1} \right\} \quad \text{ and } \quad \bar S := \N \setminus S.
\end{equation*}
The following result is now due to \cite{yagishita2025}. For completeness, the proof is recalled here.

\begin{lemma}
	Let $\{\xk\}$ be the sequence generated by Algorithm~\ref{Alg:ACPSubgradient}. Then it holds that
	\begin{equation*}
		\frac{\alpha -1}{2} \sum_{j=0}^{k} \gamma_j \| x^{j+1} - x^j\|^2 \leq \varphi(x^0) - \varphi(\xkp) + \sum_{j \in \{0, \dots, k\} \cap \bar S} \left(\gamma_{j+1} - \gamma_j\right) \| x^{j+1} - x^j \|^2.
	\end{equation*}
\end{lemma}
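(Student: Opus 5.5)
The plan is to derive a one-step inequality for each $j$ and then sum it. For each $j$, the definition \eqref{eq:defkappa} of $\kappa_{j+1}$ gives the identity
\begin{equation*}
	\varphi(x^{j+1}) = \varphi(x^j) + \langle w^j, x^{j+1}-x^j\rangle + \kappa_{j+1}\|x^{j+1}-x^j\|^2 .
\end{equation*}
This holds exactly, whenever $x^{j+1}\neq x^j$, which we may assume since otherwise the method stops. Since $x^{j+1}\in\proj_D(x^j-\tau_j w^j)$ and $x^j\in D$, the projection inequality \eqref{eq:projprop2} with $v=\tau_j w^j$ yields
\begin{equation*}
	\langle w^j, x^{j+1}-x^j\rangle \le -\frac{1}{2\tau_j}\|x^{j+1}-x^j\|^2 = -\alpha\gamma_j\|x^{j+1}-x^j\|^2 .
\end{equation*}
Combining the two, we get
\begin{equation*}
	\varphi(x^{j+1}) \le \varphi(x^j) - (\alpha\gamma_j-\kappa_{j+1})\|x^{j+1}-x^j\|^2 .
\end{equation*}

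Next we split according to $S$ and $\bar S$. If $j\in S$, then $\kappa_{j+1}\le \frac{\alpha+1}{2}\gamma_j$, so $\alpha\gamma_j-\kappa_{j+1}\ge \frac{\alpha-1}{2}\gamma_j$, and hence
\begin{equation*}
	\frac{\alpha-1}{2}\gamma_j\|x^{j+1}-x^j\|^2 \le \varphi(x^j)-\varphi(x^{j+1}).
\end{equation*}
If $j\in\bar S$, we use $\kappa_{j+1}\le \gamma_{j+1}=\max\{\kappa_0,\dots,\kappa_{j+1}\}$, which gives $\alpha\gamma_j-\kappa_{j+1}\ge \alpha\gamma_j-\gamma_{j+1}$. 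Since $\alpha>1$, we have $\alpha\gamma_j-\gamma_{j+1} \ge \frac{\alpha-1}{2}\gamma_j-(\gamma_{j+1}-\gamma_j)$; this is equivalent to $\frac{\alpha-1}{2}\gamma_j\ge 0$, which holds because $\gamma_j\ge\kappa_0>0$. Therefore
\begin{equation*}
	\frac{\alpha-1}{2}\gamma_j\|x^{j+1}-x^j\|^2 \le \varphi(x^j)-\varphi(x^{j+1}) + (\gamma_{j+1}-\gamma_j)\|x^{j+1}-x^j\|^2 .
\end{equation*}

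Finally, we sum the per-step inequalities over $j=0,\dots,k$. The function values telescope to $\varphi(x^0)-\varphi(x^{k+1})$, and the correction terms appear only for indices in $\{0,\dots,k\}\cap\bar S$, which is exactly the claimed estimate.

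The only delicate point is the case $j\in\bar S$. There we must bound $\kappa_{j+1}$ through $\gamma_{j+1}$ rather than $\gamma_j$, and we must check that the constant loss is absorbed. The elementary inequality above shows that the choice $\alpha>1$ suffices, so no further obstacle is expected.
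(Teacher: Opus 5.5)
Your proof is correct and follows the paper's argument: the paper likewise combines the definition of $\kappa_{j+1}$ with the projection inequality, splits the indices into $S$ and $\bar S$, and sums. The paper obtains that inequality by testing $y = x^j$ in the argmin characterization of the projection, which is exactly \eqref{eq:projprop2}; and in the $\bar S$ case it uses $\gamma_{j+1} = \kappa_{j+1}$ to get the stronger coefficient $(\alpha-1)\gamma_j$, whereas you use only $\kappa_{j+1} \le \gamma_{j+1}$ and absorb the slack $\frac{\alpha-1}{2}\gamma_j \ge 0$, which is equally valid.
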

\begin{proof}
	Recall that $\xkp \in \proj_D(\xk - \tauk \wk)$ is equivalent to
	\begin{equation*}
		\xkp \in \argmin_{y \in D} \| y - \xk + \tauk \wk \|^2 = \argmin_{y \in D} \frac{1}{2 \tauk} \| y - \xk \|^2 + \langle \wk, y - \xk \rangle.
	\end{equation*}
	Hence, we obtain from $\alpha \gamma_k = \frac{1}{2 \tauk}$ that
	\begin{equation*}
		\alpha \gamma_k \| \xkp - \xk \|^2 + \langle \wk, \xkp - \xk \rangle \leq 0,
	\end{equation*}
	by choosing $y = \xk$ in the equation above. On the other hand, the definition of $\kappa_{k+1}$ gives 
	\begin{equation*}
		\langle \wk, \xkp - \xk \rangle = \varphi(\xkp) - \varphi(\xk) - \kappa_{k+1} \| \xkp - \xk \|^2.
	\end{equation*}
	Together, these two equations yield
	\begin{equation}\label{eq:adaptivecombinedeq}
		\left(\alpha \gamma_k - \kappa_{k+1}\right) \| \xkp - \xk \|^2 + \varphi(\xkp) - \varphi(\xk) \leq 0.
	\end{equation}
	Now, we consider two cases: First, for $k \in S$, \eqref{eq:adaptivecombinedeq} yields
	\begin{equation}\label{eq:caseS}
	\begin{aligned}
		\varphi(\xk) - \varphi(\xkp) &\geq \left(\alpha \gamma_k - \kappa_{k+1}\right) \| \xkp - \xk\|^2 \\
			& \geq \left(\alpha \gamma_k - \left(\frac{\alpha+1}{2} \gamma_k\right)\right) \| \xkp - \xk\|^2 = \frac{\alpha -1}{2} \gamma_k \| \xkp - \xk\|^2.
	\end{aligned}
	\end{equation}
	In the second case, namely $k \in \bar S$, we have $\gamma_{k+1} = \max\{\kappa_0, \dots, \kappa_{k+1}\} = \kappa_{k+1}$ as $\alpha > 1$. Hence, applying \eqref{eq:adaptivecombinedeq} gives
	\begin{align*}
		(\alpha - 1) \gamma_k \| \xkp - \xk \|^2 &= \left( \alpha \gamma_k - \kappa_{k+1} + \gamma_{k+1} - \gamma_k \right) \| \xkp - \xk \|^2\\
		&\leq \varphi(\xk) - \varphi(\xkp) + \left(\gamma_{k+1} - \gamma_k\right) \| \xkp - \xk \|^2.
	\end{align*}
	Combining both cases and taking the sum, the assertion follows.
\end{proof}

However, in order to derive convergence guarantees for our auto-conditioned subgradient method, we require the following stronger assumption for the objective function $\varphi$, which guarantees that the stepsizes stay bounded away from zero.

\begin{assumption}\label{as:acpg}
	Assume $C \subseteq \X$ is nonempty and closed and that $\varphi$ is upper-$\mathcal{C}^2$ such that \eqref{eq:descentproperty} holds globally on $C$. Further, assume that $\inf \varphi > - \infty$.
\end{assumption}

Assumption~\ref{as:acpg} holds in particular if $D$ is a compact set, see Lemma~\ref{lemma:upperc2compact}. Further, under Assumption~\ref{as:acpg}, it holds that $\kappa_k \leq \kappa$ for all $k\geq 1$, where $\kappa$ is now a global constant on $D$. As claimed, we observe that this implies boundedness of the stepsizes $\tauk$, namely, we have that for all $k$ that
\begin{equation*}
	\tauk \in \left[ \frac{1}{2 \alpha \max\{\kappa_0, \kappa\}}, \frac{1}{2 \alpha \kappa_0}\right].
\end{equation*}

In \cite{yagishita2025} it was noted that the set $\bar S$ is finite. This can be seen as follows: For $k \in \bar S$ we have that $\frac{\alpha +1}{2} \gamma_k < \kappa_{k+1} = \gamma_{k+1} \leq \max\{\kappa_0, \kappa\}$. Assuming there is an infinite number of steps, where $k \in \bar S$, however, would mean that $\kappa_k \to \infty$, as $\frac{\alpha+1}{2}$ is a constant greater than $1$. This gives a contradiction.

Let us first derive an estimate similar to the corresponding result in Proposition~\ref{prop:convest} for our linesearch method.

\begin{proposition}\label{prop:convest2}
	There exists some $c > 0$ depending only on the iterates $x_0, \dots, x^{\max \bar S + 1}$ and in particular independent of $k$ such that
\begin{equation}
	\min_{0 \leq j \leq k} \| \xjp - \xj \| \leq \frac{c}{\sqrt{k+1}}, \text{ for all } k \in \mathbb{N}.
\end{equation}
\end{proposition}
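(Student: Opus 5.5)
The plan is to take the preceding lemma (due to \cite{yagishita2025}) and turn it into a summable bound on $\|x^{j+1}-x^j\|^2$ that does not depend on $k$. The argument then finishes with the same averaging trick used in Proposition~\ref{prop:convest}.

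First, I will bound the right-hand side of the lemma uniformly in $k$. By Assumption~\ref{as:acpg} we have $\varphi(x^0) - \varphi(\xkp) \leq \varphi(x^0) - \inf \varphi$. The second term is a sum over $\{0,\dots,k\}\cap \bar S$, and $\bar S$ is finite as argued just before the statement. Hence
\begin{equation*}
	\sum_{j \in \{0,\dots,k\}\cap \bar S} (\gamma_{j+1}-\gamma_j)\|x^{j+1}-x^j\|^2 \leq \sum_{j \in \bar S} (\gamma_{j+1}-\gamma_j)\|x^{j+1}-x^j\|^2 =: M,
\end{equation*}
and the terms in $M$ are nonnegative because $\{\gamma_k\}$ is increasing. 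The quantity $M$ only involves iterates up to index $\max \bar S + 1$, together with the values $\gamma_j$, $j\le \max\bar S+1$. These are computed from the same iterates via \eqref{eq:defkappa}. If $\bar S = \emptyset$, then $M=0$.

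Second, I will use $\gamma_j \geq \gamma_0 = \kappa_0 > 0$ on the left-hand side. This gives
\begin{equation*}
	\frac{(\alpha-1)\kappa_0}{2}\sum_{j=0}^{k}\|x^{j+1}-x^j\|^2 \leq \varphi(x^0)-\inf\varphi + M
\end{equation*}
for all $k$. Bounding the minimum by the average then yields
\begin{equation*}
	(k+1)\min_{0\le j\le k}\|x^{j+1}-x^j\|^2 \leq \sum_{j=0}^k \|x^{j+1}-x^j\|^2 .
\end{equation*}
Taking square roots, the claim follows with
\begin{equation*}
	c := \sqrt{\frac{2(\varphi(x^0)-\inf\varphi+M)}{(\alpha-1)\kappa_0}}.
\end{equation*}

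The main subtlety is the claimed dependence of $c$. Apart from the fixed data $\alpha$, $\kappa_0$ and $\inf\varphi$, the constant depends only on $x^0,\dots,x^{\max\bar S+1}$, through $\varphi(x^0)$ and $M$. This relies on the finiteness of $\bar S$, which in turn uses the global bound $\kappa_k\le\kappa$ from Assumption~\ref{as:acpg}.

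One could alternatively try to bound $\gamma_{j+1}-\gamma_j$ by $\max\{\kappa_0,\kappa\}$ and $\|x^{j+1}-x^j\|$ via \eqref{eq:projprop1}. However, that would need a bound on the subgradients, so I would keep $M$ in its raw form.
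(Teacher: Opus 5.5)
Your proposal is correct and follows the paper's proof. Both arguments use $\gamma_j \geq \kappa_0$ on the left-hand side of the preceding lemma, bound $\varphi(x^0)-\varphi(\xkp)$ by $\varphi(x^0)-\inf\varphi$, control the sum over $\{0,\dots,k\}\cap\bar S$ by a $k$-independent constant via the finiteness of $\bar S$, and then pass from the sum to $(k+1)$ times the minimum. You are only more explicit than the paper: you note $\gamma_{j+1}-\gamma_j\geq 0$ so that the partial sum is dominated by the full sum over $\bar S$, you treat $\bar S=\emptyset$, and you write out $c$.
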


\begin{proof}
	From the previous lemma and as $\kappa_0 \leq \gamma_j $, we have
	\begin{align*}
		\frac{(\alpha -1)(k+1) \kappa_0}{2} &\min_{j\in\{0, \dots k\}} \| x^{j+1} - x^j\|^2 
		\leq \frac{\alpha -1}{2} \sum_{j=0}^{k} \gamma_j \| x^{j+1} - x^j\|^2 \\
		&\leq \varphi(x^0) - \varphi(\xkp) + \sum_{j \in \{0, \dots, k\} \cap \bar S} \left(\gamma_{j+1} - \gamma_j\right) \| x^{j+1} - x^j \|^2,
	\end{align*}
	and $\varphi(x^0) - \varphi(\xkp) \leq \varphi(x^0) - \varphi^*$, where $\varphi^* = \inf \varphi$.

	Due to the finiteness of $\bar S$, the sum in the last term can be bounded by a finite value independent of $k$. The claim follows by rearranging terms.
\end{proof}

Further, Algorithm~\ref{Alg:ACPSubgradient} has the following properties, which are direct consequences of $\bar S$ being finite.

\begin{proposition}
	Under Assumption~\ref{as:acpg}, the following assertions hold:
	\begin{enumerate}
		\item The sequence $\{\varphi(\xk)\}$ converges to a finite value.
		\item It holds that $\|\xkp - \xk \| \to 0$.
	\end{enumerate}
\end{proposition}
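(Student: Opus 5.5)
The plan is to reduce everything to the eventual behaviour of the sequence after the last index of the finite set $\bar S$. Set $k_0 := \max \bar S + 1$, or $k_0 := 0$ if $\bar S = \emptyset$. For every $k \geq k_0$ we have $k \in S$, so inequality \eqref{eq:caseS} from the proof of the preceding lemma applies:
\begin{equation*}
	\varphi(\xk) - \varphi(\xkp) \geq \frac{\alpha-1}{2}\gamma_k \|\xkp - \xk\|^2 \geq \frac{\alpha-1}{2}\kappa_0 \|\xkp - \xk\|^2 \geq 0 .
\end{equation*}
Here we use $\gamma_k \geq \kappa_0 > 0$ and $\alpha > 1$.

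For part 1, the display shows that $\{\varphi(\xk)\}_{k \geq k_0}$ is monotonically nonincreasing. By Assumption~\ref{as:acpg} it is bounded below by $\inf \varphi > -\infty$. A monotone bounded sequence converges to a finite limit, and discarding the finitely many terms with $k < k_0$ does not affect convergence.

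For part 2, sum the display from $j = k_0$ to $k$. The left-hand side telescopes, giving
\begin{equation*}
	\frac{\alpha-1}{2}\kappa_0 \sum_{j=k_0}^{k} \|x^{j+1} - x^j\|^2 \leq \varphi(x^{k_0}) - \varphi(\xkp) \leq \varphi(x^{k_0}) - \inf \varphi < \infty .
\end{equation*}
The bound is independent of $k$, so the series $\sum_j \|x^{j+1}-x^j\|^2$ converges, and hence $\|\xkp - \xk\| \to 0$. Alternatively, one could sum the inequality of the preceding lemma directly and bound the finitely many $\bar S$-terms by a constant. However, the tail argument above avoids controlling those terms altogether.

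The only point requiring care is the finiteness of $\bar S$, which the paper establishes just before Proposition~\ref{prop:convest2}. That argument needs the uniform bound $\kappa_{k+1} \leq \kappa$, which comes from \eqref{eq:descentproperty} holding globally with $x = \xk$, $y = \xkp$ and $w = \wk$. For this, the iterates $\xkp$ must lie in the set on which the global descent inequality is assumed. They do, since by construction $\xkp \in \proj_D(\cdot) \subseteq D$. With that in place, the rest is routine.
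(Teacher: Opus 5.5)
Your proof is correct and is essentially the paper's argument. Beyond the last index of the finite set $\bar S$, inequality \eqref{eq:caseS} yields $0 \le \frac{\alpha-1}{2}\kappa_0\|\xkp-\xk\|^2 \le \varphi(\xk)-\varphi(\xkp)$, and both assertions follow from monotonicity together with $\inf\varphi > -\infty$; you simply spell out the telescoping step and the dependence of the finiteness of $\bar S$ on the global bound $\kappa_{k+1}\le\kappa$ (which also uses $x^0 \in D$, as the algorithm requires), where the paper leaves these implicit.
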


\begin{proof}
As $\bar S$ is finite, for all $k$ large enough, \eqref{eq:caseS} holds. Hence, 
	\begin{equation*}
		0 \leq \frac{\alpha -1}{2} \kappa_0 \| \xkp - \xk \|^2 \leq \varphi(\xk) - \varphi(\xkp).
	\end{equation*}
	Both claims now follow directly by boundedness of $\varphi$ from below.
\end{proof}

Finally, this allows us to obtain a global convergence result as before.

\begin{theorem}\label{thm:globalconvergence2}
	Let Assumption~\ref{as:acpg} hold. Then, given any $x^0 \in \X$, the results from Theorem~\ref{thm:globalconvergence} hold for the sequence generated by the method in Algorithm~\ref{Alg:ACPSubgradient} (except for the convergence of $\{\Rvalue_k\}$, which is not available in our setting).
\end{theorem}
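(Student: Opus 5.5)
The plan is to reproduce the argument of Theorem~\ref{thm:globalconvergence}. As the remark after that proof points out, it rests on only two ingredients: the stepsizes stay in a compact interval bounded away from zero, and $\|\xkp-\xk\|\to 0$. Both have already been established for Algorithm~\ref{Alg:ACPSubgradient} under Assumption~\ref{as:acpg}. By the global descent property, $\kappa_k\le\kappa$ for all $k\ge 1$, so
\[
\tauk\in I':=\Bigl[\tfrac{1}{2\alpha\max\{\kappa_0,\kappa\}},\tfrac{1}{2\alpha\kappa_0}\Bigr],
\]
a compact interval contained in $(0,\infty)$. The preceding proposition, which uses the finiteness of $\bar S$, gives both $\|\xkp-\xk\|\to0$ and convergence of $\{\varphi(\xk)\}$ to a finite value. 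Finite termination is handled separately. If the method stops, or if $\xkp=\xk$, then $\xk\in\proj_D(\xk-\tauk\wk)$ with $\tauk>0$, which is exactly $-\wk\in N^P_D(\xk)$ by definition of the proximal normal cone.

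For the first assertion, take a subsequence $\{\xk\}_{k\in K}$ converging to an accumulation point $x^*$; note $x^*\in D$ since $D$ is closed. The subgradients $\wk$ are bounded along $K$, because $\partial\varphi$ is locally bounded near $x^*$ by local Lipschitz continuity. Passing to a further subsequence, we get $\wk\to w^*$ and $\tauk\to\bar\tau\in I'$, so in particular $\bar\tau>0$. Since $\|\xkp-\xk\|\to0$, also $\xkp\to_K x^*$. The projection step gives $\|\xkp-(\xk-\tauk\wk)\|=\dist(\xk-\tauk\wk,D)$. Letting $k\to_K\infty$ and using continuity of the distance function yields $\|\bar\tau w^*\|=\dist(x^*-\bar\tau w^*,D)$, which says $x^*\in\proj_D(x^*-\bar\tau w^*)$, i.e.\ $-w^*\in N^P_D(x^*)$. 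The robustness property of $\partial\varphi$ then gives $w^*\in\partial\varphi(x^*)$, and so $x^*$ is proximal critical.

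For the second assertion, $\{\varphi(\xk)\}$ converges by the preceding proposition, and continuity of $\varphi$ gives $\varphi(\xk)\to_K\varphi(x^*)$. Hence the whole sequence converges to $\varphi(x^*)$; there are no reference values $\Rvalue_k$ in this method, as the statement notes. The only delicate point is the uniform lower bound on $\tauk$, which relies on Assumption~\ref{as:acpg} holding on a set containing all iterates so that $\kappa_{k+1}\le\kappa$. This also means the statement should be read with $x^0\in D$ and the assumption imposed on $D$.
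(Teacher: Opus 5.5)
Your proposal is correct and follows the paper's proof, which simply states that the argument of Theorem~\ref{thm:globalconvergence} carries over. That argument rests on exactly the two ingredients you isolate: stepsizes in a compact subinterval of $(0,\infty)$, and $\|\xkp-\xk\|\to 0$. You also write the limit step with $\dist(\xk-\tauk\wk,D)$, which corrects a typo in the paper. One small refinement: you do not need $x^0\in D$, since only $\kappa_1$ involves $x^0$, and as a single finite number it still gives $\gamma_k\le\max\{\kappa_0,\kappa_1,\kappa\}$, hence a uniform positive lower bound on $\tauk$ and finiteness of $\bar S$.
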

\begin{proof}
	The proof is the same as for Theorem~\ref{thm:globalconvergence}.
\end{proof}

\section{Numerical Results}\label{Sec:Numerics}

We now want to evaluate the algorithmic performance of the nonmonotone projected subgradient method from Algorithm~\ref{Alg:NonmonotoneSubgradient} as well as the auto-conditioned one from Algorithm~\ref{Alg:ACPSubgradient}. We implemented both algorithms in Python and selected three insightful and relevant examples for testing. In both implementations we used a stopping criterion modeled after the optimality condition \eqref{eq:proxcritical}, namely terminating if
\begin{equation}\label{eq:terminationcrit}
 \|x^k-\proj_D(x^k-\tau w^k)\|_{\infty}\leq \varepsilon
\end{equation}
for small $\varepsilon,\tau>0$, where $w^k\in \partial \varphi(x^k)$. Note that if \eqref{eq:terminationcrit} holds with $\varepsilon = 0$, then the iterate $\xk$ is indeed already a proximal critical point.

\subsection{MPEC-style model problem}

Mathematical programs with equilibrium/complementarity constraints (MPECs or MPCCs) are notoriously difficult to optimize with standard methods, as their feasible set may contain points that do not fulfill any common constraint qualification \cite{scheelScholtes2000}. For this problem class, one can construct examples, for which the different normal cones introduced in Section~\ref{Sec:Background} do not coincide, and thus algorithms which are searching for points with weaker stationarity may converge to suboptimal solutions. In the following example, we can identify three points with different stationarity properties, hence we can illustrate the advantageous theoretical guarantees of our projected subgradient method in this case.

\begin{figure}[htbp]
    \centering
    \begin{tikzpicture}[
        scale=1.2,
        dot/.style={
            circle,
            draw,
            fill=black,
            inner sep=2pt
        }
    ]

        % Coordinate axes
        \draw[gray!40, ->]
            (-1,0) -- (4,0)
            node[right, black] {$x_1$};

        \draw[gray!40, ->]
            (0,-1) -- (0,5)
            node[above, black] {$x_2$};

        % ------------------------------------------------
        % Contours of f(x_1,x_2)
        %
        % f = 1/2 (x_1-1)^2 + 1/2 (x_2-1)^2
        % f = alpha  <=>  radius = sqrt(2 alpha)
        % ------------------------------------------------

        \draw[gray!50, thin]
            (1,1) circle ({sqrt(0.5)});

        \draw[gray!50, thin]
            (1,1) circle (1);

        \draw[gray!50, thin]
            (1,1) circle ({sqrt(2)});

        \draw[gray!50, thin]
            (1,1) circle (2);

        \draw[gray!50, thin]
            (1,1) circle ({sqrt(6)});

        % ------------------------------------------------
        % Feasible set
        % ------------------------------------------------

        % [0,infty) x {0}
        \draw[ultra thick, red, ->]
            (0,0) -- (3.5,0);

        % {0} x [0,2]
        \draw[ultra thick, red]
            (0,0) -- (0,2);

        % {(t,t+2) : t >= 0}
        \draw[ultra thick, red, ->]
            (0,2) -- (2.5,4.5);

        % ------------------------------------------------
        % Distinguished points
        % ------------------------------------------------

        \node[dot, label=left:{$p_2$}]
            at (0,1) {};

        \node[dot, label=below:{$p_1$}]
            at (1,0) {};

        \node[dot, label=left:{$m$}]
            at (0,2) {};

        \node[dot, label=below left:{$c$}]
            at (0,0) {};

    \end{tikzpicture}
    \caption{Feasible set of the MPEC-style problem with contours of the objective.}
    \label{fig:MPECfeas}
\end{figure}

To this end, we consider the following two-dimensional optimization problem
\begin{equation}
 \min_{(x_1,x_2)\in \R^2}f(x_1,x_2):=\frac{1}{2}(x_1-1)^2+\frac{1}{2}(x_2-1)^2 \quad \text{s.t.}\quad x\in D,
\end{equation}
where the feasible set $D$ is defined as
\begin{align*}
 D:=\Big([0,\infty) \times \{0\}\Big) \cup \Big(\{0\}\times [0,2]\Big) \cup \Big\{(t,t+2) \, \Big| \, t\geq 0\Big\},
\end{align*}
cf.\ Figure~\ref{fig:MPECfeas}. This example is a modification of the MPEC used in \cite[Example 10]{scheelScholtes2000}. The feasible set is obviously non-empty and closed, also by the construction as the union of three closed, convex, non-empty sets, the projection of $x$ onto $D$ can be easily calculated by first projecting onto each of the three line segments, and then just comparing the distances. The projections onto the line segments with minimal distance are then the set members of $\proj_D(x)$.

The objective function $f$ describes a two-dimensional paraboloid centered around $(1,1)$. In particular, the objective function is smooth, hence the notions of criticality as defined in Section~\ref{Sec:Background} and stationarity, as defined for example in \cite{olikier2025}, coincide. Thus, in the following we use the more commonly used term $*$-stationarity to describe the optimiality conditions
\begin{align*}
 0\in \nabla f(\bar{x})+N^*_D(\bar{x})
\end{align*}
for $*\in\{P,F,M,C\}$.

By the monotonicity of the objective, we see that $f$ attains its minimal value on $D$ in the closest points to $(1,1)$, which are $p_1:=(1,0)$ and $p_2:=(0,1)$. By Proposition~\ref{prop:necoptcond}, both points must be P-stationary, and hence also F- and M-stationary. The point $c:=(0,0)$ is non-optimal, but C-stationary, which holds analogously to \cite[Example 10]{scheelScholtes2000}. In fact, Scheel and Scholtes showed the stricter property that $c$ is even C-stationary in the MPEC sense. We now show that the point $m:=(0,2)$ is M-stationary, but not F-stationary, hence also suboptimal.

We have
\begin{align*}
 \nabla f(m)= \begin{pmatrix}
                 -1 \\ 1
                \end{pmatrix},
\end{align*}
hence we want to show that $(1,-1)$ lies in $N^M_D(m)$. We define the sequence $x^k:=(1/k,2+1/k)\to m$. In order to determine $N^F_D(x^k)$, one can see that, as each $x^k$ lies on the line segment $\{(t,t+2)\mid t>0\}$, the tangent cone consists only of the direction of the line segment, namely $T_D(x^k)=\{(t,t)\mid t\in \R\}$. We have for the polar cone
\begin{align*}
 N^F_D(x^k)=T_D(x^k)^\circ = \{(t,-t)\mid t\in \R\}.
\end{align*}
In particular, this shows $(1,-1)\in N^F_D(x^k)$ for each $k\in \N$. By defining $v^k\equiv (1,-1)$, this shows
\begin{align*}
 -\nabla f(m)\in N^M_D(m)
\end{align*}
and thus the M-stationarity of $m$. However, we have $d:=(0,-1)\in T_D(m)$, and thus
\begin{align*}
 -\nabla f(m)^T d = 2>0,
\end{align*}
which shows that $m$ is not F-stationary.

This problem is an example for a setting, where using a method searching only for M- or C-stationary points may lead to suboptimal results, and the theoretical guarantees provided in Sections~\ref{Sec:AlgorithmNL} and~\ref{Sec:AlgorithmAC} matter. For numerical confirmation, we ran both projected gradient methods on a grid of $10000$ starting values equidistantly spaced in the box $[-1,4]^2$. We set $\varepsilon = 10^{-6}$ and $\tau = 0.1$. In all instances, the projected gradient method converges to one of the optimal P-stationary points $p_1$ or $p_2$, as displayed in Table~\ref{tab:attractorsMPEC} and visualized in Figure~\ref{fig:basins}.

\begin{table}
\centering
\begin{tabular}{c|cc}
 Method & LS & AC \\
 \hline
 converging to $p_1$ & 4687 & 4864 \\
 converging to $p_2$ & 5313 & 5136
\end{tabular}
\caption{Convergence of starting grid values for the nonmonotone linesearch (LS) and the auto-conditioned stepsize (AC)}
\label{tab:attractorsMPEC}
\end{table}

\begin{figure}
\begin{subfigure}{.5\textwidth}
 \centering
  \includegraphics[width=0.9\linewidth]{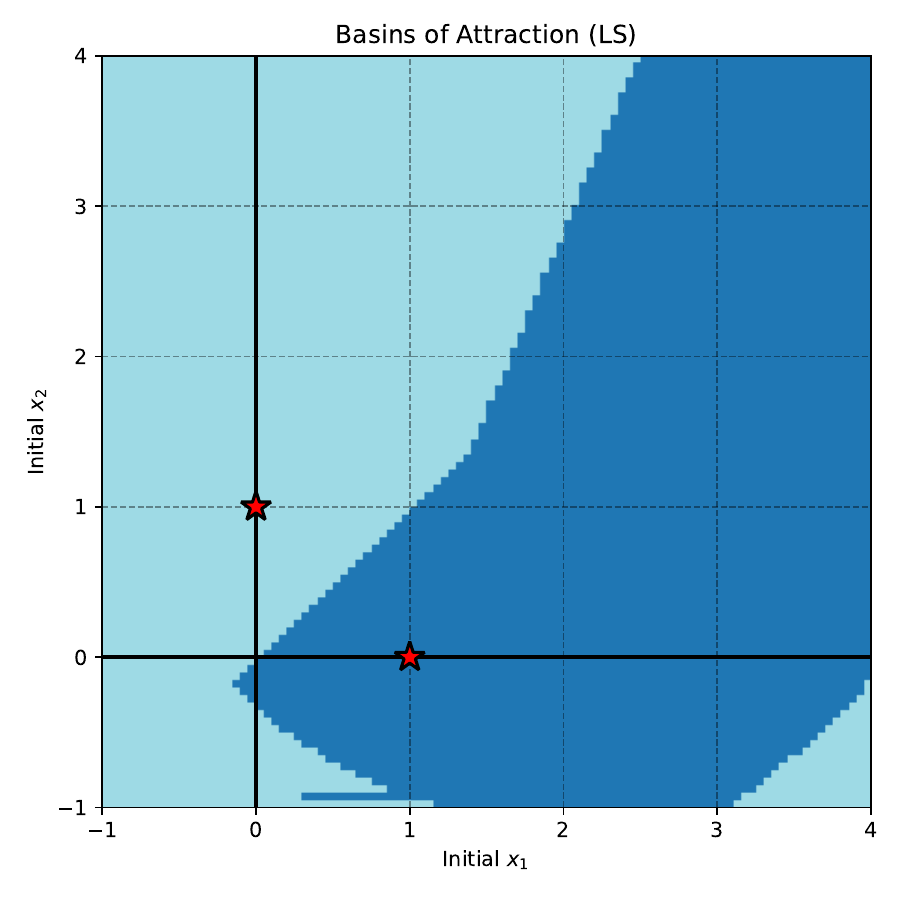}
\end{subfigure}
\begin{subfigure}{.5\textwidth}
 \centering
  \includegraphics[width=0.9\linewidth]{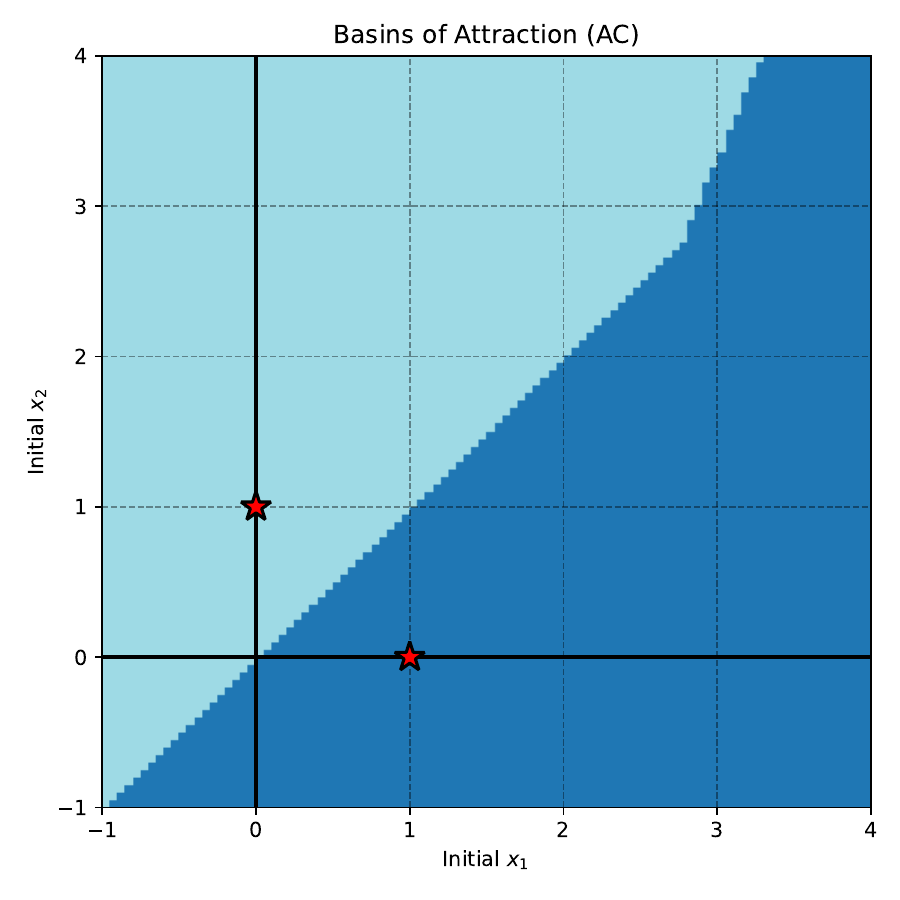}
\end{subfigure}
  \caption{Basins of attraction for the grid of starting values. Dark area: convergence to $p_1$, light area: convergence to $p_2$}
  \label{fig:basins}
\end{figure}

\subsection{MAXCUT}

The MAXCUT problem is an NP-hard \cite{GoemansWilliamson} graph theoretical problem, we follow the definition from \cite{KanzowGeometric}: Let $G:=(V,E)$ be a weighted undirected graph with vertex set $V=\{1,\ldots, n\}$ and edges $e_{ij}$ connecting $i,j\in V$. Let $A=(a_{ij})$ be the (symmetric) adjacency matrix, where an entry $a_{ij}=a_{ji}$ denotes the non-negative weight of $e_{ij}$, which can also be set to zero. We now define a \emph{cut} for $S\subseteq V$ as the set
\begin{align*}
 \delta(S):=\{e_{ij}\mid i\in S,j\in \bar S\},
\end{align*}
with $\bar S = V\setminus S$. This describes the set of all edges which connect a point in $S$ to a point in $\bar S$. The weight of the cut is defined by
\begin{align*}
 w(S):=\sum_{e_{ij}\in \delta(S)}a_{ij}.
\end{align*}
Our goal is now to find the cut with maximum  weight. Following the reasoning of \cite{GoemansWilliamson}, the problem is transformed to a semidefinite program by assigning a value $w_i\in\{-1,+1\}$ for each vertex $v_1,\ldots, v_n$, where $w_i=1$ if $v_i\in S$ and $w_i=-1$ if $v_i\in \bar S$. Now, instead of having $w_i$ with $|w_i|=1$ be elements of the unit sphere in one dimension, the problem is generalized by considering vectors $y_i\in \R^n$ with $\|y_i\|_2=1$. Defining the matrix $Y\in \R^{n\times n}$ as $(y_{ij})$ with $y_{ij}=y_i^Ty_j$, we obtain the semidefinite relaxation to the MAXCUT problem via $L:=\diag(Ae)-A$ by
\begin{equation}\label{prob:SDR}
 \min_{Y\in \R^{n\times n}_{\mathrm{sym}}} -\frac{1}{4}\trace(LY) \quad \text{s.t.}\quad \diag Y = e,\, Y\succeq 0.
\end{equation}
The solution of this problem is matrix-valued and has to be transformed back in order to obtain a cut vector, for details see \cite{GoemansWilliamson}. However, if we add the additional constraint $\rank Y=1$ to problem \eqref{prob:SDR}, a feasible Matrix $Y$ is a symmetric matrix of rank one with $\diag Y=e$, which results in $Y=ww^T$ with $w\in \{-1,1\}^n$. Hence by adding the rank constraint and determining the vector $w$, we recuperate a solution of the original MAXCUT problem.
Our resulting optimization problem is now
\begin{equation}\label{prob:maxcutRank}
 \min_{W\in \R^{n\times n}_{\mathrm{sym}}} -\frac{1}{4}\trace(LW) \quad \text{s.t.}\quad \diag W = e,\, W\succeq 0,\, \rank W =1.
\end{equation}
As the feasible set is still not tractable for projection, we modify the problem to
\begin{equation}\label{prob:maxcutPG}
 \min_{W\in \R^{n\times n}_{\mathrm{sym}}} -\frac{1}{4}\trace(LW)+\rho \Big(\trace (W) -\|W\|_2\Big) \quad \text{s.t.}\quad \diag W = e,\, W\succeq 0
\end{equation}
with a penalty parameter $\rho > 0$. The penalty term is constructed utilizing several properties of the problem. Let $\lambda_1(W)\geq \lambda_2(W)\geq\ldots\geq \lambda_n(W)\geq 0$ be the eigenvalues of the symmetric positive semidefinite matrix $W$ sorted by value from largest to smallest.
\begin{itemize}
 \item By symmetry and positive semidefiniteness we have $\|W\|_2=\lambda_1(W)$ and
\begin{align*}
 \trace(W)=\sum_{i=1}^n \lambda_i(W)\geq 0
 \end{align*}
 Hence we obtain the expression
 \begin{align*}
  \trace(W)-\|W\|_2=\sum_{i=2}^n \lambda_i(W)
 \end{align*}
 and we penalize the $n-1$ smallest eigenvalues of $W$ in order to obtain a matrix of rank $\leq 1$.
 \item By the diagonal constraint $\diag W = e$, the zero-matrix is not feasible, and the penalty term is a sensible choice for enforcing matrix rank $1$.
\end{itemize}

Next we want to certify that the problem formulation \eqref{prob:maxcutPG} fits into our projected gradient framework. First, owing to Proposition~\ref{Prop:CharUpperC2} (c), we see that our objective function is upper-$\mathcal{C}^2$ by writing
\begin{align*}
 g(W):=\trace\left(\left(-\frac{1}{4}L+\rho I\right)W\right), \quad h(W)=\rho\|W\|_2
\end{align*}
and noting that $g$ as a linear function is differentiable with Lipschitz gradient and $h$ as a scaled norm is convex. Furthermore, by setting the objective $f=g-h$, the limiting subgradient is given by
\begin{align*}
 \partial f(W)=-\frac{1}{4}L+\rho I -\rho\Big\{ vv^T\mid v\in E_{\lambda_1(W)},\|v\|=1\Big\},
\end{align*}
where $E_{\lambda_1(W)}$ denotes the eigenspace of the largest eigenvalue of $W$.

Projecting onto the feasible set
\begin{align*}
 D:=\Big\{W\in\R^{n\times n}\mid W\text{ symmetric},W\succeq 0,\diag W=e \Big\}
\end{align*}
has no explicit form, but it is a common problem known as \emph{Nearest Correlation Matrix}, and there exist numerically efficient ways to compute this projection. In our implementation, we used a method by Borsdorf and Higham, which in turn is a preconditioned form of a semismooth Newton method, cf.\ \cite{BorsdorfNearCorr}.

As a test problem collection, we used the \texttt{rudy} dataset from the MAXCUT problems in the Biq Mac library\footnote{\url{https://biqmac.aau.at/biqmaclib.html}}, which consists of $130$ randomly generated graphs. The optimal values of the problems are known. In order to obtain a good starting value for our projected subgradient method, we ran the \texttt{SCS} solver for semidefinite programs from the \texttt{cvxpy} package \cite{cvxpy} on the semidefinite convex relaxation \eqref{prob:SDR}, and fed the output into our algorithms. We ran all problem instances with a tolerance of $\varepsilon=10^{-6}$ and chose $\tau=0.1$ as well as $\rho=5$.

Algorithms~\ref{Alg:NonmonotoneSubgradient} and~\ref{Alg:ACPSubgradient} terminated successfully on all problem instances. The solutions of the two algorithms coincide on all problems and achieve a lower relative error than the solution of the semidefinite relaxation. A more detailed evaluation of the accuracies of our PG implementation versus the semidefinite relaxation is given in Figure~\ref{fig:accuracy-counts-rudy}.

\begin{figure}[ht]
    \centering
    \begin{tikzpicture}
        \begin{axis}[
            width=\textwidth,
            height=6cm,
            ybar,
            bar width=5pt,
            xlabel={Percentage of optimal value},
            ylabel={Number of test problems},
            xmin=75.5,
            xmax=100.5,
            ymin=0,
            ymax=160,
            xtick={76,...,100},
            ytick={0,20,...,140},
            enlarge x limits=0.01,
            legend style={
                at={(0.5,1.02)},
                anchor=south,
                legend columns=-1
            },
            nodes near coords,
            every node near coord/.append style={
                font=\tiny
            },
        ]

        \addplot[
            fill=blue,
            draw=blue,
            nodes near coords,
            every node near coord/.append style={
                font=\tiny,
                xshift=-1pt,
                yshift=8pt,
            },
        ]
        table[
            x=x,
            y=count_pgd,
            col sep=comma
        ]{accuracy_counts.csv};

        \addplot[
            fill=orange,
            draw=orange,
            nodes near coords,
            every node near coord/.append style={
                font=\tiny,
                xshift=1pt,
                yshift=2pt,
            },
        ]
        table[
            x=x,
            y=count_cvxpy,
            col sep=comma
        ]{accuracy_counts.csv};

        \legend{PGD, CVXPY}

        \end{axis}
    \end{tikzpicture}
    \caption{Accuracies for the MAXCUT problems from the \texttt{rudy} dataset in the Biq Mac library.}
    \label{fig:accuracy-counts-rudy}
\end{figure}

Here we can also see the superior performance of PGD on the \texttt{rudy} collection. For example, our method is capable of solving $62$ of the $130$ problems with an accuracy of at least $98\%$, where the semidefinite relaxation achieves this accuracy only on $15$ problems.
We also note that, as in the reasoning above, the results of the semidefinite relaxation cannot be directly interpreted as a solution to MAXCUT, we observed them to be even infeasible to problem \eqref{prob:maxcutRank} as they generally have full rank.

Comparing both PG approaches, we display the metrics of CPU time, iterations of the projected subgradient method and of the Newton iterations needed for calculating the projections in Figure~\ref{fig:performance_profiles}.
\begin{figure}
    \centering

    % ============================================================
    % Row 1: CPU time
    % ============================================================

    \begin{subfigure}{0.49\textwidth}
        \centering
        \begin{tikzpicture}
            \begin{axis}[
                width=\linewidth,
                ylabel={Number of problems solved},
                %xmode=log,
                grid=major,
                legend pos=south east,
                legend style={font=\tiny},
                ymajorgrids=true,
                xmajorgrids=true,
            ]

            \addplot[
                thick,
                const plot
            ] table[
                x=tau,
                y=rho,
                col sep=comma
            ] {ppLScpu_time_seconds.csv};
            \addlegendentry{LS}

            \addplot[
                red,
                thick,
                const plot
            ] table[
                x=tau,
                y=rho,
                col sep=comma
            ] {ppACcpu_time_seconds.csv};
            \addlegendentry{AC}

            \end{axis}
        \end{tikzpicture}
        \caption{Computation time (s)}
        \label{fig:rudy_cpu}
    \end{subfigure}
    \hfill
     \begin{subfigure}{0.49\textwidth}
        \centering
        \begin{tikzpicture}
            \begin{axis}[
                width=\linewidth,
                ylabel={Number of problems solved},
                %xmode=log,
                grid=major,
                legend pos=south east,
                legend style={font=\tiny},
                ymajorgrids=true,
                xmajorgrids=true,
            ]

            \addplot[
                thick,
                const plot
            ] table[
                x=tau,
                y=rho,
                col sep=comma
            ] {ppLSpgd_iterations.csv};
            \addlegendentry{LS}

            \addplot[
                red,
                thick,
                const plot
            ] table[
                x=tau,
                y=rho,
                col sep=comma
            ] {ppACpgd_iterations.csv};
            \addlegendentry{AC}

            \end{axis}
        \end{tikzpicture}
        \caption{PG iterations}
        \label{fig:rudy_pgd}
    \end{subfigure}
    \vspace{0.5cm}

    % ============================================================
    % Row 3: Newton iterations
    % ============================================================

    \begin{subfigure}{0.49\textwidth}
        \centering
        \begin{tikzpicture}
            \begin{axis}[
                width=\linewidth,
                ylabel={Number of problems solved},
                scaled x ticks=base 10:-3,
                %xmode=log,
                grid=major,
                legend pos=south east,
                legend style={font=\tiny},
                ymajorgrids=true,
                xmajorgrids=true,
            ]

            \addplot[
                thick,
                const plot
            ] table[
                x=tau,
                y=rho,
                col sep=comma
            ] {ppLSnewton_iterations.csv};
            \addlegendentry{LS}

            \addplot[
                red,
                thick,
                const plot
            ] table[
                x=tau,
                y=rho,
                col sep=comma
            ] {ppACnewton_iterations.csv};
            \addlegendentry{AC}

            \end{axis}
        \end{tikzpicture}
        \caption{Newton iterations}
        \label{fig:rudy_newton}
    \end{subfigure}
    \hfill

    \caption{Performance profiles comparing the projected subgradient method with linesearch and auto-conditioning on the MAXCUT problems.}
    \label{fig:performance_profiles}
\end{figure}

We note that the linesearch performs slightly better than the auto-conditioned variant across all metrics. This may be explained by the fact that we observed quick acceptance of the descent direction in most of the iterations of PG with linesearch, hence the auto-conditioned version cannot save significant computational effort compared to the linesearch in this case.

\subsection{Robust Principal Component Analysis}

Robust Principal Component Analysis (Robust PCA) was first introduced in \cite{RPCAWright} as the following task: Let $Y=L+S$ be a given matrix $Y\in\R^{m\times n}$ that is the sum of an unknown low-rank signal $L\in \R^{m\times n}$ and a sparse error term $S\in \R^{m\times n}$. We can only observe $Y$ and want to reconstruct $L$ and $S$. This leads to the following optimization problem
\begin{equation}\label{prob:rpca}
 \min_{L,S\in \R^{m\times n}}\|Y-L-S\|_F \quad \text{s.t.}\quad \rank L \leq r,\, \|S\|_0\leq k
\end{equation}
for $0\leq r \leq \min \{m,n\}$ and $0\leq k\leq mn$, where $\|S\|_0$ denotes the number of non-zero entries in $S$. The initial approach in \cite{RPCAWright} was to convexify the problem, while more recent approaches like the one in \cite{HammRPCA} tackle problem \eqref{prob:rpca} directly. We also want to follow the direct approach, but first have to bring problem \eqref{prob:rpca} into a suitable form for our algorithmic framework. First we define the sets
\begin{align*}
D:=\big\{A\in \R^{m\times n}\mid \rank A \leq r\big\}
\end{align*}
and
\begin{align*}
 M:=\big\{Y-S\mid \|S\|_0\leq k\big\}.
\end{align*}
Then we can reformulate problem \eqref{prob:rpca} as
\begin{equation}\label{prob:rpcaUpperC2}
 \min_{L\in \R^{m\times n}}\dist_M(L)^2 \quad\text{s.t.}\quad L\in D.
\end{equation}
This representation has the following properties: The squared distance function is upper-$\mathcal{C}^2$, as for arbitrary $X\in \R^{m\times n}$, it can be written as
\begin{align*}
 \dist_M(X)^2&=\inf_{Y\in M}\|X-Y\|^2 \\
 &=\inf_{Y\in M}\Big\{\|X\|^2-2\langle X,Y\rangle +\|Y\|^2\Big\} \\
 &=\|X\|^2 -\sup_{Y\in M}\Big\{2\langle X,Y\rangle - \|Y\|^2\Big\}.
\end{align*}
Setting
\begin{align*}
 g(X):=\|X\|^2, \quad h(X):=\sup_{Y\in M}\Big\{2\langle X,Y\rangle - \|Y\|^2\Big\},
\end{align*}
we note that $X\mapsto 2\langle X,Y\rangle - \|Y\|^2$ as an affine function is in particular convex for each $Y\in M$, and thus taking the supremum over all $Y\in M$ implies the convexity of $h$, cf.\ \cite[Proposition 8.16]{BauschkeCombettes}. Thus we can write $\dist_M^2 = g-h$, with $g$ differentiable with Lipschitz gradient and $h$ convex, and Proposition~\ref{Prop:CharUpperC2} (c) is again applicable.

The limiting subgradient of the squared distance function is well known, we have
\begin{align*}
 \partial \dist_M(X)^2=X-\proj_M(X)
\end{align*}
cf.\ \cite[Lemma 7]{GarrigosDistSquared}. Projecting a matrix $X\in \R^{m\times n}$ onto the matrices with rank less than $r$ is also a standard task and is done by truncating the singular value decomposition of $X$ to the largest $r$ singular values, and setting the rest to zero, cf.\ \cite[Section 6.2]{OlikierStratifiedSets}. In order to calculate the projection onto $M$, which is used in the calculation of $\dist_M^2$ as well as its subgradient, we note that by construction an element $W\in \proj_M(X)$ is obtained by taking the $k$ largest absolute value entries in $Y-X$ and setting the others to zero.

In order to test the robust PCA implementation, we used the dataset \texttt{CDnet2014}\footnote{\url{https://changedetection.net/dataset2014/}}\cite{CDnet2014}, which consists of different image sequences from videos, mostly from static surveillance camera footage. On this data, we can use robust PCA to do video background subtraction. We construct our observation matrix $Y\in \R^{m\times n}$ by setting its columns to a vector of length $m=pq$ containing the flattened version of each gray scale video frame of resolution $p\times q$. The number $n$ of columns in $Y$ is then the number of frames in the sequence we want to analyze. The intuition for using robust PCA to separate the video background from foreground is that in surveillance camera footage, the camera is static, hence the background components should have a constant pixel value across the whole observation sequence. Thus, if we only had the background in $Y$, we would have constant rows, and $Y$ would be a rank-$1$-matrix. However, the background is perturbed by the dynamic movements in the foreground, which can now be modeled as the sparse error term $S\in \R^{m\times n}$, as we also assume the foreground motion not to completely dominate the image. Hence, in this setting, the task of robust PCA is to determine the low rank background $L$, which can be used to separate background and foreground in our observation $Y$.

\begin{table}
 \centering
 \begin{tabular}{c|cc}
 Name &  PGD iterations & CPU time in seconds \\
 \hline
 \texttt{PETS2006} LS & $16$ & $109.56$ \\
 \texttt{PETS2006} AC & $17$ & $116.47$ \\
 \hline
 \texttt{highway} LS & $3$ & $20.23$ \\
 \texttt{highway} AC & $3$ & $19.87$ \\
 \hline
 \texttt{skating} LS & $9$ & $62.14$ \\
 \texttt{skating} AC & $8$ & $56.32$ \\
\end{tabular}
 \caption{Algorithmic Data for the robust PCA example}
  \label{tab:RPCAData}
\end{table}

\begin{figure}[htbp]
    \centering
    \setlength{\tabcolsep}{3pt}

    \begin{tabular}{c|c c c c}
        Name &
        V. &
        Original image $Y$ &
        Low-rank background $L$ &
        $|Y-L|$ (pointwise) \\[4pt]
        \hline

        % PETS2006
        \multirow{2}{*}{\texttt{PETS2006}} &
        LS &
        \includegraphics[width=0.25\textwidth]{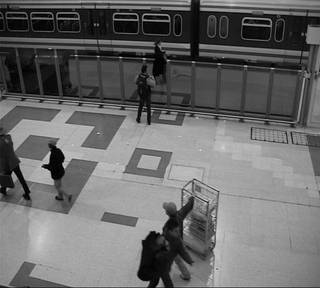} &
        \includegraphics[width=0.25\textwidth]{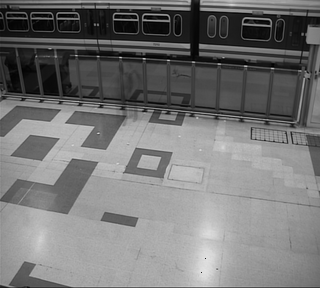} &
        \includegraphics[width=0.25\textwidth]{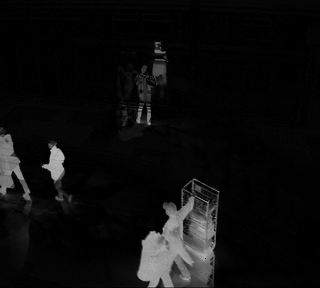}
        \\[4pt]

        &
        AC &
        \includegraphics[width=0.25\textwidth]{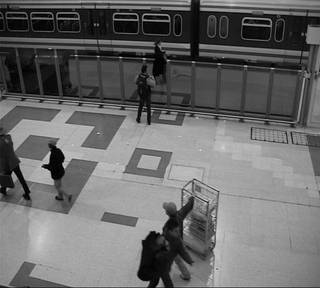} &
        \includegraphics[width=0.25\textwidth]{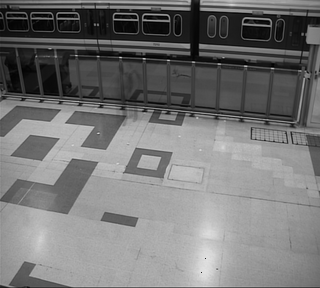} &
        \includegraphics[width=0.25\textwidth]{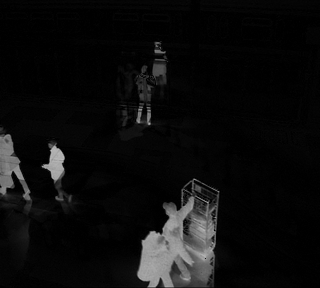}
        \\[8pt]

        \hline
        % highway
        \multirow{2}{*}{\texttt{highway}} &
        LS &
        \includegraphics[width=0.25\textwidth]{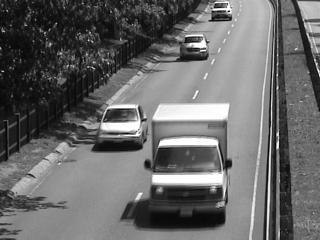} &
        \includegraphics[width=0.25\textwidth]{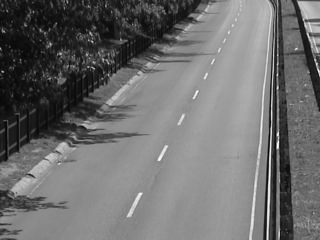} &
        \includegraphics[width=0.25\textwidth]{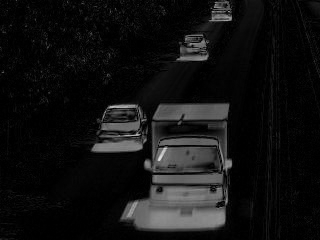}
        \\[4pt]

        &
        AC &
        \includegraphics[width=0.25\textwidth]{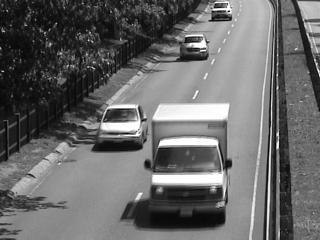} &
        \includegraphics[width=0.25\textwidth]{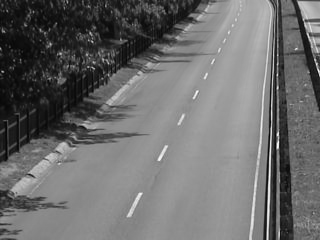} &
        \includegraphics[width=0.25\textwidth]{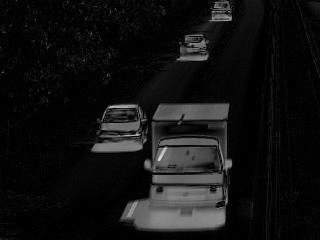}
        \\[8pt]

        \hline
        % skating
        \multirow{2}{*}{\texttt{skating}} &
        LS &
        \includegraphics[width=0.25\textwidth]{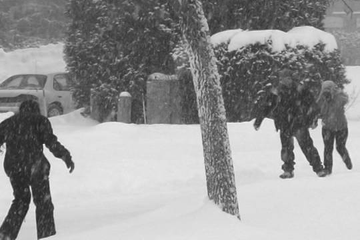} &
        \includegraphics[width=0.25\textwidth]{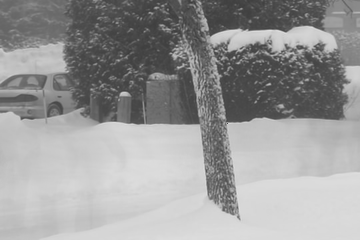} &
        \includegraphics[width=0.25\textwidth]{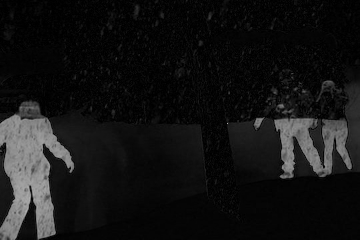}
        \\[4pt]

        &
        AC &
        \includegraphics[width=0.25\textwidth]{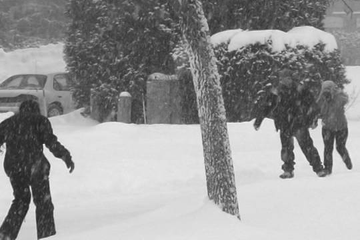} &
        \includegraphics[width=0.25\textwidth]{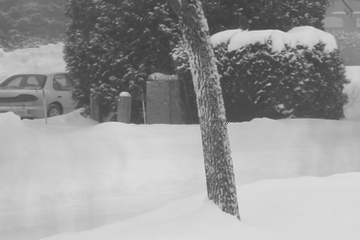} &
        \includegraphics[width=0.25\textwidth]{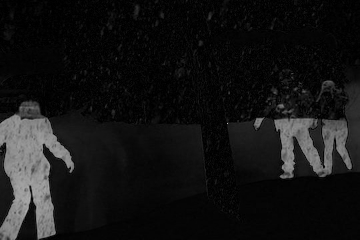}

    \end{tabular}

    \caption{RPCA results for video background subtraction.}
    \label{fig:rpcaImages}
\end{figure}

We ran the projected subgradient method in linesearch and auto-conditioned version on three different image sequences \texttt{PETS2006} ($288\times 320$), \texttt{highway} ($240\times 320$) and \texttt{skating} ($240\times 360$) of length $n=400$. Starting with a random matrix of the same dimensions as $Y$, all algorithms converged with desirable visual results displayed in Figure~\ref{fig:rpcaImages} for single example frames. Here we set $r=2$, $k=mn\cdot 10^{-4}$, $\tau = 10^{-5}$ and $\varepsilon=10^{-4}$. Computational data is displayed in Table~\ref{tab:RPCAData}. The computation times are overall comparable to the ones in \cite{AccAltProj} obtained by an accelerated alternating projection algorithm, but should be also read with caution, as they are highly dependent on the problem dimension, which in return scales with the underlying image resolution and sequence length.

\section{Final Remarks}\label{Sec:Final}

In this paper, we introduced the projected subgradient method for a class of nonsmooth objectives, namely upper-$\mathcal{C}^2$ functions. We showed that both presented algorithm versions, the one with a nonmonotone linesearch in Algorithm~\ref{Alg:NonmonotoneSubgradient} and the auto-conditioned Algorithm~\ref{Alg:ACPSubgradient}, posess strong convergence guarantees to a proximal critical point under mild assumptions. Furthermore, we validated the theoretical guarantees in a numerical setting and applied the method to solve real world numerical tasks with promising results. Possible directions for our future research include the design of new methods for the considered optimization problem with convergence to points which satisfy even stronger stationarity notions. Furthermore, the investigation of a generalization to proximal (sub-)gradient methods deserves attention. However, as already noted in \cite{olikier2025}, this would need significantly new techniques.
\par\addvspace{\baselineskip}

\noindent
\small\textbf{Funding.}
No funding was received to assist with the preparation of this manuscript.
\par\addvspace{\baselineskip}

\noindent
\small\textbf{Code and Data Availability.}
The source code for the implementation of the projected subgradient methods as well as the MPEC style, MAXCUT and Robust PCA numerical examples is available at \url{https://github.com/jakrueg/projected-subgradient}, the problem data can be sourced from \url{https://biqmac.aau.at/biqmaclib.html} and \url{https://changedetection.net/dataset2014/} respectively.
\par\addvspace{\baselineskip}

\noindent
\small\textbf{Conflict of interest.}
The authors have no competing interests to declare that are relevant to the content of
this article.
\par\addvspace{\baselineskip}

\printbibliography

\end{document}